\setlist{listparindent = \parindent, parsep=0pt,}
\title{On the theories classified by an \'etendue}
\author{J. L. Wrigley}
\address[Joshua L. Wrigley]{Université Paris Cité, CNRS, IRIF, F-75013, Paris, France.}
\email{wrigley@irif.fr}
\urladdr{https://jlwrigley.github.io/}
\theoremstyle{plain}
\newtheorem*{mainthm}{Theorem}
\newtheorem{thm}{Theorem}[section]
\newtheorem{lem}[thm]{Lemma}
\newtheorem{coro}[thm]{Corollary}
\newtheorem{prop}[thm]{Proposition}
\theoremstyle{definition}
\newtheorem*{introdf}{Definition}
\newtheorem{df}[thm]{Definition}
\newtheorem{rem}[thm]{Remark}
\newtheorem{ex}[thm]{Example}
\renewcommand{\phi}{\varphi}
\newcommand{\id}{\text{id}}
\newcommand{\N}{\mathbb{N}}
\newcommand{\theory}{\mathbb{T}}
\newcommand{\Dtheory}{\mathbb{D}}
\newcommand{\Set}{{\bf Set}}
\newcommand{\topos}{\mathcal{E}}
\newcommand{\ftopos}{\mathcal{F}}
\newcommand{\Sub}{\mathrm{Sub}}
\newcommand{\Sh}{\mathbf{Sh}}
\newcommand{\X}{\mathbb{X}}
\newcommand{\cat}{\mathcal{C}}
\newcommand{\Topos}{\mathbf{Topos}}
\newcommand{\Frm}{\mathbf{Frm}}
\newcommand{\lrset}[2]{\left\{{#1}\,\middle\vert\,{#2}\right\}}
\newcommand{\sett}[2]{\{{#1}\,\vert\,{#2}\}}
\newcommand{\type}{\mathfrak{t}}
\newcommand{\power}{\mathscr{P}}
\newcommand{\Vectf}{\mathbb{V}_F}
\newcommand{\Vectff}{\mathbb{V}_F^f}
\newcommand{\0}{{\bf 0}}
\newcommand{\stab}{\mathrm{stab}}
\begin{document}
	
	\begin{abstract}
		We give a model-theoretic characterisation of the geometric theories classified by \'etendues -- the `locally localic' topoi.  They are the theories where each model is determined, syntactically and semantically, by any witness of a fixed collection of formulae.
		
		\smallskip
		\noindent \textsc{\keywordsname.}{
			\'Etendue, co-ordinatisable theory, uniform rigidity.
		}
		
		\smallskip
		\noindent \textsc{\subjclassname.}{
			03G30, 18F10, 18B25, 03C50.
		}
	\end{abstract}

	\maketitle

	\section*{Introduction}
	In this paper, we establish a logical characterisation of a class of Grothendieck topoi called \emph{\'etendues}.  Every topos classifies a theory of \emph{geometric logic} (\cite[\S D3.1]{elephant},\cite[\S 2.1]{TST}), where we recall that the classifying topos of a theory $\theory$ is a topos $\topos_\theory$ such that morphisms $\ftopos \to \topos_\theory$ correspond to models of $\theory$ internal to $\ftopos$, or more precisely there is an equivalence of pseudofunctors ${\bf Topos}(-,\topos_\theory) \simeq \theory\text{-}{\bf Mod}(-)$ (the morphisms of topoi we consider are \emph{geometric morphisms}, \cite[Definition A4.1.1]{elephant}).  As the name suggests, classifying topoi are analogous to classifying spaces from algebraic topology (cf.\ \cite[\S IV.2]{SGA4-3} and \cite{milnor}).  Specific classes of topoi can be carved out in terms of which geometric theories they classify.  The topoi of sheaves on a locale, for example, can be characterised logically as the classifying topoi for \emph{propositional} geometric theories (\cite[Remark D3.1.14]{elephant}).
	
	Not far beyond the topoi of sheaves on a locale, we encounter \emph{\'etendues}.  As expressed in \cite[\S 3]{lawvere_etendue}, an {\'etendue} is a topos that is `locally sheaves on a locale', or explicitly:
	\begin{introdf}[\S IV.9.8.2(e) \cite{SGA4-3}]
		A topos $\topos$ is an \emph{\'etendue} if there exists an object $A \in \topos$ whose support $A \to 1$ is an epimorphism (we say that $A$ is \emph{inhabited}) and whose slice $\topos/A$ is equivalent to a topos of sheaves on a locale.
	\end{introdf}
	Through their interplay with \emph{\'etale topological groupoids} (cf.\ \cref{ex:etale_grpd}), \'etendues have been promoted as a topos-theoretic framework in which to study various \emph{non-commutative} generalisations of Stone duality, where a dual equivalence is established between a class of {inverse semigroups} and a class of {\'etale topological groupoids} \cite{lawson_boolean_monoids,lawson_semigroups,resende,cockettgarner}.  In addition, an important body of literature has coalesced around \'etendues concerning their relation to \emph{foliations} and \emph{orbifolds} from differential geometry \cite{local_eqv,moer_fourier,moerpronk_orbifolds,rosenthal_local_eqv}.  It is not within the scope of the current paper to explore these applications, and instead the evocative accounts in \cite{hofstra_funk_invitation}, for  the connection to semigroup theory, and in \cite{moer_zara}, for the connection to foliation theory, are recommended.
	
	Our characterisation of the theories classified by an \'etendue falls into the well-worn logical paradigm of a syntactic characterisation of semantic phenomena\footnote{A characterisation for \emph{atomic} topoi with a similarly model-theoretic flavour is presented in \cite{caramello_atomic}, giving a counterpart to the Ryll-Nardzewski theorem \cite[Theorem 7.3.1]{hodges} for geometric logic.}.  For an example of such a interplay of syntax and semantics, consider the following situation: let $\theory$ be a (classical) first-order theory and $\phi$ a unary formula in the language, i.e.\ with one free variable.  Gaifman's co-ordinatisation theorem asserts that the following are equivalent:
	\begin{enumerate}
		\item given a pair of models $M,N \vDash \theory$, each isomorphism of the induced substructures $\phi(M) \cong \phi(N)$ extends to a unique isomorphism $M \cong N$ (we say that $\theory$ is \emph{rigidly, relatively categorical} over $\phi$),
		\item for any model $M \vDash \theory$, every element of $M$ is in the \emph{definable closure} of $\phi(M)$ (see \cite[Exercise 1.4.10]{marker} or \cref{df:coord} for the definition of definable closure; we say that $\theory$ is \emph{co-ordinatised} over $\phi$).
	\end{enumerate}
	(See \cite[\S 7]{relcat} for a proof; as shown in \cite{evanshewitt}, the uniqueness assumption on the extended isomorphism, i.e.\ rigidity, cannot be removed from the above equivalence.)

We will logically characterise \'etendues by considering similar conditions.  We delay the precise statement of our characterisation theorem (\cref{thm:mainthm_geometric}) until after the relevant terminology has been developed, but if we restrict to the classifying topoi of theories of classical logic we obtain the following simplification that closely resembles Gaifman's co-ordinatisation theorem:
	\begin{mainthm}\label{maintheorem}
		For a classical theory $\theory$ classified by a topos $\topos$, the following are equivalent:
		\begin{enumerate}[label = (\Alph*)]
			\item\label{enum:coord} $\theory$ is uniformly co-ordinatisable,
			\item\label{enum:relcat} $\theory$ is {uniformly rigid} (in sets),
			\item\label{enum:etendue} $\topos$ is an {\'etendue}.
		\end{enumerate}
	\end{mainthm}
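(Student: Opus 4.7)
My plan is to route the three-way equivalence through the standard dictionary between the syntactic category of $\theory$ and the classifying topos $\topos_\theory$ (\cite[\S D3]{elephant}). The proof splits into two halves: the equivalence (A)$\Leftrightarrow$(B), which is a ``uniform'' version of Gaifman's co-ordinatisation theorem (\cite[\S 7]{relcat}), and the equivalence (B)$\Leftrightarrow$(C), which translates the étendue condition into a syntactic one. The central auxiliary fact I would establish or cite first is that for any geometric formula $\phi(\vec{x})$, the slice $\topos_\theory/\form{x}{\phi}$ is again a classifying topos — namely for the expansion $\theory^\phi$ of $\theory$ by a generic witness $\vec{c}$ of $\phi$ — and that this slice is localic precisely when every formula-in-context of $\theory^\phi$ is generated by the closed formulae of $\theory^\phi$.

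For (B)$\Rightarrow$(C), given a uniform collection $\Phi$ of formulae witnessing rigidity I would form the object $A := \coprod_{\phi \in \Phi}\form{x}{\phi} \in \topos_\theory$. Inhabitedness of $A \to 1$ then records the assumption that every model of $\theory$ admits a witness of some $\phi \in \Phi$, while localic-ness of $\topos_\theory/A$ is the topos-theoretic counterpart of uniqueness of morphisms between pointed models: a $2$-cell between geometric morphisms into a localic topos is trivial, which on points says homomorphisms over a witness are unique. Conversely, from an étendue presentation $(\topos_\theory, A)$ I would extract, via the syntactic site, a covering family of formulae for $A$; localic-ness of the slice then forces the required pointed rigidity directly.

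The equivalence (A)$\Leftrightarrow$(B) follows by a uniform adaptation of Gaifman's argument. The direction (A)$\Rightarrow$(B) is nearly formal: if each element of each model is in the definable closure of the chosen witness $\vec{a}$, then any homomorphism is determined by its image on $\vec{a}$. The converse adapts Gaifman's contrapositive: an element lying outside every definable closure would yield, via a suitable generic expansion or doubling construction, a second homomorphism fixing the witness but moving that element, contradicting rigidity. The natural venue in the topos-theoretic setting is the category of points of $\topos_\theory$ together with its fibration over $A$.

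The principal obstacle I expect lies in the uniform packaging in the middle step: the collection $\Phi$ need not be bounded by a single sort or a finite family, so one must ensure that $A = \coprod_\Phi \form{x}{\phi}$ is a legitimate object of $\topos_\theory$ and that the ``slice classifies $\theory^\phi$'' lemma scales to infinite disjoint unions. A secondary subtlety concerns the Gaifman step in pure geometric logic: the classical proof invokes saturation, a device that must be replaced by an internal generic-model argument inside $\topos_\theory$ (or a suitable cover thereof), carefully isolating those consequences of rigidity that survive in the absence of classical negation.
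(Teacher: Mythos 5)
Your decomposition into the two biconditionals (A)$\Leftrightarrow$(B) and (B)$\Leftrightarrow$(C) creates a problem that the paper deliberately avoids: it forces you to prove (B)$\Rightarrow$(A) directly, i.e.\ that uniform rigidity implies uniform co-ordinatisability by a ``uniform adaptation of Gaifman's argument.'' You correctly flag that the classical proof uses saturation, but you do not resolve this, and it is not a repairable detail: geometric logic is infinitary, compactness and completeness fail (the paper cites \cite[Example C1.2.8]{elephant}), and moreover uniform rigidity is a condition on models \emph{in every topos}, not just set-based ones, so a doubling/generic-expansion construction in the category of points cannot see enough. The paper sidesteps this entirely by proving the cycle (A)$\Rightarrow$(B)$\Rightarrow$(C)$\Rightarrow$(A), where (C)$\Rightarrow$(A) is a purely syntactic covering argument: if $\topos_\theory/\Psi$ is localic then the object $\Psi\times(y=y)\to\Psi$ is covered by subterminals of the slice, i.e.\ by subobjects of $\Psi$, and unwinding these covering morphisms in the syntactic category produces exactly the provably functional formulae $\theta(\vec{x},y)$ required by the definition of uniform co-ordinatisation. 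You have the germ of this in your auxiliary fact about slices being ``generated by closed formulae,'' but you deploy it only to reach rigidity, leaving no route back to (A).

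The second gap is in (B)$\Rightarrow$(C). You justify only the easy direction: a $2$-cell into a localic topos is trivial, hence localic implies rigid. What you actually need is the converse — that a topos admitting at most one isomorphism between any pair of generalised points $\ftopos\rightrightarrows\topos$ is localic — and this is a substantive result, not a formality. The paper's proof (its \cref{lem:no-isos-implies-localic}) invokes the Joyal--Tierney representation of $\topos$ by an open localic groupoid $\X=(X_1\rightrightarrows X_0)$ and then shows, using stability of open surjections under pullback and the bi-pullback construction of $X_1$, that the quotient $X_0\twoheadrightarrow\pi_0(\X)$ induces a fully faithful essentially surjective map to the discrete groupoid on $\pi_0(\X)$, whence $\topos\simeq\Sh(\pi_0(\X))$. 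Without this lemma (or an equivalent, e.g.\ via monic sites), your implication from rigidity to localic slices has no proof. Your (C)$\Rightarrow$(B) sketch and your (A)$\Rightarrow$(B) sketch are essentially sound and match the paper's \cref{prop:urr-implies-etendue} (read backwards) and \cref{prop:coord_implies_rigid} respectively, modulo the care needed to argue with global elements and covering families rather than literal elements when the ambient topos is not $\Set$.
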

Essentially, the prefix `uniformly' means that, rather than each model $M \vDash \theory$ being determined by the substructure $\phi(M)$, the model $M$ is determined by \emph{any element} of $\phi(M)$.  A prototypical example of such a theory, that we discuss further in \cref{ex:vectff}, is given by vector spaces: any $n$-dimensional vector space (over a finite field) is determined up to isomorphism by a basis, i.e.\ any witness for the formula $B_n(\vec{x})$ expressing that $\vec{x}$ is an $n$-tuple of maximally linearly independent vectors.  We should not be surprised at the appearance of a notion of `co-ordinatisation' in the characterisation of \'etendues, given the informal remark of Hofstra and Funk in \cite[\S 5.4]{hofstra_funk_invitation}.

	
	\subsection*{Overview} 
	The ideas behind the proof of our characterisation can be summarised as follows: we use two characterisations of localic topoi.  Firstly, a topos is localic if and only if every object admits a covering by subterminal objects (\cite[Theorem 5.37]{topos}).  Translating this condition to the setting of the slice of a classifying topos yields the notion of uniform co-ordinatisation and the equivalence \cref{enum:coord} $\iff$ \cref{enum:etendue} (see \cref{prop:prebound_cov_implies_localic} and \cref{coro:etendue_implies_coord_over_set}).  Next, a topos is localic if and only if the canonical morphism to its \emph{localic reflection} is an equivalence (\cite[\S A4.6]{elephant}), which occurs if and only if post-composing with this morphism yields an equivalence on hom-categories.  Faithfulness of post-composition is expressed by the notion of (strong) uniform rigidity (fullness and essential surjectivity is automatic for a classical theory, see \cref{sec:coherent}), and thus we obtain the equivalence \cref{enum:relcat} $\iff$ \cref{enum:etendue} (see \cref{coro:mainthm_classical}).
	
	The paper proceeds as follows.  In \cref{sec:coord}, we introduce the notion of uniform co-ordinatisability (\cref{df:coord}) and prove that the classifying topos of a geometric theory is an \'etendue if and only if the theory is uniformly co-ordinatised (\cref{coro:etendue_implies_coord_over_set}).  
	
	In \cref{sec:rigid}, we define \emph{strong} uniform rigidity and uniform rigidity (\cref{df:rigid}).  We also discuss the notion of \emph{realising} (model-theoretic) types in the arbitrary topos-theoretic context (\cref{df:realise-types}).  We show that the classifying topos of a geometric theory is an \'etendue if and only if the theory is strongly uniformly rigid (in any topos) and satisfies a condition concerning realisation of types (\cref{thm:mainthm_geometric}).  If the theory is coherent or classical, any mention of realising types can be dropped (\cref{sec:coherent}).
	
	Finally, in \cref{sec:appl}, we discuss the interplay between our logical characterisation of \'etendues and some previously known facts, namely that an \'etendue has a small set of points and that every topos is a hyperconnected quotient of an \'etendue.
	
	\subsection*{Prerequisites and notation}  By topos, we mean a Grothendieck topos.  We will assume knowledge of geometric logic and classifying topos theory (textbook accounts can be found in \cite[Part D]{elephant}, \cite[\S 2]{TST} or \cite[\S 8]{MR}).  We phrase the syntax of geometric logic as Gentzen-style sequents but leave the free variables implicit by context, writing $\phi(\vec{x},\vec{y}) \vdash \psi(\vec{x},\vec{z})$ to mean ``for all $\vec{x},\vec{y},\vec{z}$, if $\phi(\vec{x},\vec{y})$ then $\psi(\vec{x},\vec{z})$''.  We write $A \subseteq B$ to say that $A$ is a \emph{subobject} of $B$.  For convenience, we will assume that our geometric theories are single-sorted, which can be ensured by \cite[Lemma D1.4.13]{elephant}, but our argument is easily modified to the multi-sorted context.  
	
	Eventually, beginning in \cref{rem:model_in_any_topos}, we will discuss models of a first-order theory internal to any topos (see \cite[\S D1.2]{elephant} for how to interpret first-order logic inside an arbitrary topos), but initially all models are assumed to be \emph{set-based}.

	\section{\'Etendues and uniform co-ordinatisability}\label{sec:coord}
	In this first section, we introduce the notion of a uniformly co-ordinatisable theory and show that a geometric theory is uniformly co-ordinatisable if and only if its classifying topos is an \'etendue.  Recall that a topos $\topos$ is an \'etendue if there exists an inhabited object $A \in \topos$ for which the slice topos $\topos/A$ is localic.  We say that $\topos$ is an \'etendue \emph{over} $A$.  Recall also that the slice topos $\topos/A$ comes equipped with an \emph{\'etale} geometric morphism $\pi_A \colon \topos/A \to \topos$ whose inverse image $\pi_A^\ast$ sends an object $B \in \topos$ to the product projection $B \times A \to A$, considered as an object of $\topos/A$ (\'etale geometric morphisms are also called \emph{locally homeomorphic} geometric morphisms, see \cite[p.~651]{elephant}, since an \'etale geometric morphism between topoi of sheaves on a space corresponds to a local homeomorphism of spaces).
	\begin{ex}[$G$-sets]
		The topos $\Set^G$ of $G$-sets, for a group $G$, is a primordial example of an \'etendue.  Indeed, the slice $\Set^G/G$, over $G$ equipped with its canonical $G$-action, is equivalent to $\Set$.
	\end{ex}
	\begin{ex}[Sheaves on an \'etale groupoid]\label{ex:etale_grpd}
		The following example encompasses the former and serves to describe all possible \'etendues.  A localic groupoid $\X = (X_1 \rightrightarrows X_0)$ is \emph{\'etale} when the source and target maps $s, t \colon X_1 \rightrightarrows X_0$ are both \'etale maps (i.e.\ local homeomorphisms).  The topos $\Sh(\X)$ of equivariant sheaves on $\X$ is an \'etendue.  The source map $ X_1 \to X_0$ can be equipped with an $X_1$-action and so defines an object of $\Sh(\X)$ for which $\Sh(\X){/}X_1\simeq \Sh(X_0)$.  
		
		Moreover, every \'etendue $\topos$ is of the form $\Sh(\X)$ for some \'etale localic groupoid $\X$.  This easily follows by an application of Joyal-Tierney descent for topoi \cite{JT}.  If $\topos$ is an \'etendue over $A \in \topos$, then the \'etale morphism $\Sh(X_0) \simeq \topos/A \to \topos$ is an effective descent morphism of topoi (being open and surjective) with localic domain, and so the topos $\topos$ is equivalent to the topos of sheaves on the localic groupoid whose locale of objects is the locale $X_0$ and whose locale of arrows and source and target maps are obtained via the (bi-)pullback of topoi
		\[
		\begin{tikzcd}
			\Sh(X_1) &[-30pt] \simeq \Sh(X_0) \times_\topos \Sh(X_0) \ar{rr}{s} \ar{d}[']{t}  \ar["\lrcorner"{anchor=center, pos=0.125}, draw=none]{rrrd} &[-60pt] & \Sh(X_0) \simeq &[-30pt] \topos{/}A \ar{d}{\pi_A} \\
			& \Sh(X_0) & \simeq \topos{/}A \ar{rr}[']{\pi_A} && \topos.
		\end{tikzcd}
		\]
		The morphisms $s$ and $t$ are \'etale since \'etale geometric morphisms are preserved by pullback (which follows from \cite[Corollary 4.35]{topos}).  See \cite[\S 3]{mono_site} for a more in-depth discussion.  If $\topos$ is a topos \emph{with enough points} (\cite[Defintion 1.1.22]{TST}), then we can instead use \'etale \emph{topological} groupoids (\cite[\S IV.9.8.2]{SGA4-3}).
	\end{ex}
	\subsection{Co-ordinatisable theories}  We now recall the notion of \emph{co-ordinatisable} theory from \cite[\S 7]{relcat} and introduce our `uniform' specialisation.
	\begin{ex}\label{ex:vectff}
		It is useful to have a basic example to revisit when each new notion is introduced: we will use finite dimensional vector spaces over a finite field $F$.  Let $\Vectf$ denote the classical theory of $F$-vector spaces.  We can view $\Vectf$ as a coherent theory (also called a positive theory, i.e.\ avoiding mention of the negation symbol) via a process known as \emph{Morleyization} -- for each positive formula $\phi(\vec{x})$, we add a formal relation symbol $(\neg \phi)(\vec{x})$ and axioms expressing that $(\neg \phi)(\vec{x})$ is the negation of $\phi(\vec{x})$ (see \cite[Lemma D1.5.13]{elephant}).  As far as geometric logic is concerned, this means we can negate any formula that does not include an infinitary disjunction.  
		
		We write $B_n(x_1,\dots,x_n)$ as shorthand for the formula
		\begin{multline*}\textstyle
			\bigwedge_{\vec{\lambda} \in F^n \setminus \vec{0}} (\lambda_1 x_1 + \dots + \lambda_n x_n \neq \0 )
			\textstyle \land  \,  \neg 
			\exists y. \ 
			\bigwedge_{\vec{\lambda} \in F^{n+1} \setminus \vec{0}} (\lambda_1 x_1 + \dots + \lambda_n x_n + \lambda_{n+1}y \neq \0)  
		\end{multline*}
		expressing that $(x_1, \dots , x_n)$ is a maximally linearly independent set of vectors, i.e.\ a basis of length $n$.  The geometric theory of \emph{finite dimensional $F$-vector spaces} $\Vectff$ is given by adding to $\Vectf$ the geometric axiom $\top \vdash \bigvee_{n \in \N} \exists \vec{x}. \ B_n(\vec{x})$.
		
		Let $V \vDash \Vectff$ be a model that satisfies $V \vDash \exists \vec{x}. \ B_n(\vec{x})$, i.e.\ $V$ is an $n$-dimensional vector space.  It goes without saying that any witness $(v_1, \dots , v_n) \in V^n$ of $B_n(\vec{x})$ determines $V$ as any other vector $v' \in V$ can be expressed as a linear combination of $(v_1, \dots , v_n)$. 
		The notion of \emph{uniform co-ordinatisability} intends to capture this phenomenon.  It is a strengthening of the co-ordinatisability property discussed in \cite{relcat}.
	\end{ex}
	Let $\theory$ be a geometric theory and let $\Psi$ be a set of geometric formulae in the language of $\theory$ (note that we allow the number of free variables of each $\psi(\vec{x}) \in \Psi$ to vary).  Given a model $M \vDash \theory$, we write $\psi(M)$ for the interpretation of $\psi(\vec{x})$ in $M$, i.e.\ the set $\lrset{\vec{m} \in M^n}{M \vDash \psi(\vec{m})}$, and we write $\Psi(M)$ for the coproduct $\coprod_{\psi \in \Psi} \psi(M)$.
	\begin{df}\label{df:coord}
		\begin{enumerate}
			\item (\cite[\S 7]{relcat}) The theory $\theory$ is \emph{co-ordinatised} over $\Psi$ if, for any model $M \vDash \theory$, every $m \in M$ is in the definable closure of $\Psi(M)$, i.e.\ for every $m \in M$ there exists a geometric formula $\phi(\vec{x},y)$ and $\vec{n} \in \Psi(M)^{|\vec{n}|}$ for which $m$ is the unique element of $M$ such that $M \vDash \phi(\vec{n},m)$. 
			
			\item We say that $\theory$ is \emph{uniformly co-ordinatised} over $\Psi$ if, for each $\psi(\vec{x}) \in \Psi$, there is a set of geometric formulae $\Theta_\psi$ of the form $\theta(\vec{x},y)$, such that $\theory$ proves the sequents \begin{align*}
				\theta(\vec{x},y) & \vdash \psi(\vec{x}) , \\
				\theta(\vec{x},y) \land \theta(\vec{x},y') & \vdash (y = y'), \\
				\psi(\vec{x}) \land (y=y) & \vdash \textstyle \bigvee_{\theta \in \Theta_\psi}  \theta(\vec{x},y).
			\end{align*}
		\end{enumerate}
		We will say that $\theory$ is \emph{uniformly co-ordinatisable} if it is uniformly co-ordinatised over some \emph{inhabited} $\Psi$, i.e.\ a set of formulae for which $\theory$ proves the sequent $\top \vdash \bigvee_{\psi \in \Psi} \exists \vec{x}. \ \psi(\vec{x})$.
	\end{df}
	Note that, for each model $M \vDash \theory$ of a {uniformly co-ordinatised} theory, every $m \in M$ is in the definable closure of the singleton $\{\vec{n}\}$, for any $\vec{n} \in \Psi(M)$.  Hence, if $\theory$ is uniformly co-ordinatised over $\Psi$, it is co-ordinatised over $\Psi$.  The adjective \emph{uniform} is intended to convey the idea that the choice of parameters $\vec{n} \in \Psi(M)$ must be chosen `uniformly' when co-ordinatising the elements of $M$ (cf.\ \cref{rem:switch_quantifiers}).  The parameters $\vec{n} \in \Psi(M)$ behave as a `logical basis' for the model $M$, analogous to the notion of basis in linear algebra, while $\Psi$ plays the role of a definable set of `logical bases'.  Trivially, any theory is co-ordinatised over the formula $(x=x)$, and so clearly there are plenty of examples of theories that are co-ordinatisable but not uniformly co-ordinatisable.
	\begin{ex}
		The theory $\Vectff$ is uniformly co-ordinatised over $\lrset{B_n(\vec{x})}{n \in \N}$, the witnessing formulae being given by $\sett{B_n(\vec{x}) \land (\lambda_1 x_1 + \dots + \lambda_n x_n = y)}{\vec{\lambda} \in F^n}$.
	\end{ex}
	\begin{rem}\label{rem:theta_non-empty}
		Suppose that, for some $\psi' \in \Psi$, the set $\Theta_{\psi'}$ is empty.  Then $\theory$ proves the sequent $\psi'(\vec{x}) \land (y=y) \vdash \bigvee \emptyset \equiv \bot$ and so $\psi'(\vec{x})$ is $\theory$-provably equivalent to $\bot$, i.e.\ $\psi'(\vec{x})$ is inconsistent with $\theory$.  Therefore, $\theory$ still proves the sequent $\top \vdash  \bigvee_{\psi \in \Psi\setminus\{\psi'\}} \exists \vec{x}. \ \psi(\vec{x})$ and thus we can safely assume that, for all $\psi \in \Psi$, the set $\Theta_\psi$ is non-empty.
	\end{rem}
	\begin{rem}
		Suppose $\theory$ is uniformly co-ordinatised over a set of geometric formulae $\Psi$, then the set of co-ordinatising formulae $\Theta_\psi$, for each $\psi(\vec{x}) \in \Psi$, can be taken to consist only of \emph{regular} formulae, i.e.\ formulae built only using the logical connectives $\{\,=,\exists,\land\,\}$ (\cite[Definition D1.1.3]{elephant}).  Given a geometric formula $\theta(\vec{x},y) \in \Theta_\psi$, recall from \cite[Lemma D1.3.8]{elephant} that, up to provable equivalence, $\theta(\vec{x},y) $ can be rewritten as an infinite disjunction $\bigvee_{i \in I_\theta} \theta_i(\vec{x},y)$ where each $\theta_i(\vec{x},y)$ is a {regular} formula.  Note that, for each $i \in I_\theta$, the theory $\theory$ still proves the following sequents:
		\begin{align*}
			\theta_i(\vec{x},y) \vdash \theta(\vec{x},y) & \vdash \psi(\vec{x}), \\
			\theta_i(\vec{x},y) \land \theta_i(\vec{x},y') \vdash \theta(\vec{x},y) \land \theta(\vec{x},y') & \vdash (y=y'), \\
			\psi(\vec{x}) \land (y=y) & \textstyle \vdash \bigvee_{\theta \in \Theta_\psi} \theta(\vec{x},y) \equiv \bigvee_{\theta \in \Theta_\psi} \bigvee_{i \in I_\theta} \theta_i(\vec{x},y),
		\end{align*}
		and so the set of geometric formulae $\Theta_\psi$ can be replaced by the set of regular formulae $\bigcup_{\theta \in \Theta_\psi} \lrset{\theta_i(\vec{x},y)}{i \in I_\theta}$.
	\end{rem}
	\begin{rem}
		The definition of a theory co-ordinatised over a formula $\Psi$ given in \cref{df:coord} differs from the notion presented in \cite{relcat} in that \cref{df:coord} refers to geometric logic, whereas \cite{relcat} exists within classical first-order logic.  We briefly confirm that the definable closure of $\Psi(M)$ using geometric formulae coincides with the definable closure of $\Psi(M)$ using coherent formulae.  Given a model $M \vDash \theory$, suppose that $m \in M$ is the unique element such that $M \vDash \phi(\vec{n},m)$, for some $\vec{n} \in \Psi(M)^{|\vec{n}|}$ and geometric formula $\phi(\vec{x},y)$.  By \cite[Lemma D1.3.8]{elephant}, $\phi(\vec{x},y)$ is provably equivalent to an infinite disjunction of positive formulae $\bigvee_{i \in I} \phi_i(\vec{x},y)$, and so $M \vDash \phi_i(\vec{n},m)$ for some $i \in I$.  Given another element $m' \in M$ for which $M \vDash \phi_i(\vec{n},m')$, then clearly $M \vDash \phi(\vec{n},m')$ too.  Thus, we conclude that $m = m'$. 
	\end{rem}
	\begin{rem}\label{rem:switch_quantifiers}
		Informally, uniform co-ordinitisation can be seen as a variant of co-ordinatisation where the quantification of clauses has been changed, in the sense that a theory $\theory$ is co-ordinatised over, for simplicity, a single unary formula $\psi(x)$ if and only if: 
		\begin{multline*}
			\forall M \vDash \theory \text{ a model}. \ 
			\exists \, \Phi_M  \text{ a set of geometric formulae}.  \
			\forall m \in M. \\ 
			\exists \vec{n} \in \psi(M)^{|\vec{n}|}. \
			\exists \phi(\vec{x},y) \in \Phi_M. \
			M \vDash ( \phi(\vec{n},m) \land \forall y. \ ( \phi(\vec{n},y) \rightarrow y = m )),
		\end{multline*}
		whereas $\theory$ is uniformly co-ordinatised over $\psi(x)$ only if: 
		\begin{multline*}
			\exists \, \Theta \text{ a set of binary geometric formulae}. \ 
			\forall M \vDash \theory \text{ a model}. \ 
			\forall n \in \psi(M). \\
			\forall m \in M. \ 
			\exists \theta \in \Theta. \ 
			M \vDash ( \theta(n,m) \land \forall y. \ ( \theta(n,y) \rightarrow y = m )).
		\end{multline*}
		The above `only if' becomes an equivalence if we can equate provability modulo $\theory$ with satisfiability in set-based models, i.e.\ we assume that the geometric theory $\theory$ has \emph{enough set-based models} (\cite[Definition 1.4.17]{TST}).  (This is normally known as the `completeness theorem', but for infinitary logics such as geometric logic, the completeness theorem fails, see for instance \cite[Example C1.2.8]{elephant}.)
	\end{rem}
	\begin{rem}[cf.\ Proposition 7.2 \cite{relcat}]\label{rem:finite_models}
		We are restricted to \emph{finite} model theory if we only consider coherently uniformly co-ordinatised theories, that is a coherent (i.e.\ positive) theory $\theory$ that is uniformly co-ordinatised over an inhabited set of coherent formulae $\Psi$.  Since $\theory$ proves the sequent $\top \vdash \bigvee_{\psi \in \Psi} \exists \vec{x}. \ \psi(\vec{x})$, by the compactness theorem for coherent logic we have that $\theory$ proves $\top \vdash \bigvee_{\psi \in \Psi'} \exists \vec{x}. \ \psi(\vec{x})$ for some finite subset $\Psi' \subseteq \Psi$.  Similarly, as $\theory$ proves $\psi(\vec{x}) \land (y=y) \vdash \bigvee_{\theta \in \Theta_\psi} \theta(\vec{x},y)$, for each $\psi \in \Psi$, a second application of compactness gives us that $\theory$ proves $\psi(\vec{x}) \land (y=y) \vdash \bigvee_{\theta \in \Theta'_\psi} \theta(\vec{x},y)$ for some finite subset $\Theta'_\psi \subseteq \Theta_\psi$.  Thus, for any set-based model $M$ of $\theory$, there must be a witness $\vec{m} \in M$ for one of the finitely many formulae in $\Psi'$, and once such a witness of $\psi \in \Psi'$ is fixed, every other element $n \in M$ is the unique element satisfying $\theta(\vec{m},y)$ for one of the finitely many formulae $\theta \in \Theta'_\psi$.  Hence, every set-based model of $\theory$ is finite (cf.\ also \cref{rem:small_points_via_logic}).
	\end{rem}
	\subsection{\'Etendues are unifromly co-ordinatisable}
	We now prove that a geometric theory is classified by an \'etendue if and only if it is uniformly co-ordinatisable.  Recall that the classifying topos $\topos_\theory$ of a geometric theory $\theory$ contains the \emph{syntactic category} of $\theory$ as a full subcategory, and moreover this subcategory generates the topos (see, for instance, \cite[\S 8]{MR}).  The objects of the syntactic category are given by geometric formulae in context, which we write as $\phi(\vec{x})$, while an arrow $[\theta] \colon \phi(\vec{x}) \to \psi(\vec{y})$ is labelled by a \emph{provably functional formula} (up to provable equivalence modulo $\theory$; see \cite[\S D1.4]{elephant}, \cite[Definition 1.4.1]{TST}), i.e.\ a formula $\theta(\vec{x},\vec{y})$ such that $\theory$ proves the sequents 
	\begin{align*}
		\theta(\vec{x},\vec{y}) & \vdash \phi(\vec{x}) \land \psi(\vec{y}) & \text{(well-definedness)}, \\
		\phi(\vec{x})& \vdash \exists \vec{y} . \ \theta(\vec{x},\vec{y}) & \text{(totality)}, \\
		\theta(\vec{x},\vec{y}) \land \theta(\vec{x},\vec{y}') & \vdash \vec{y} = \vec{y}' & \text{(functionality)}.
	\end{align*}
	The composite of two such arrows $[\chi] \circ [\theta]$ is given by the relational composite, i.e.\ $\exists \vec{y}. \ \theta(\vec{x},\vec{y}) \land \chi(\vec{y},\vec{z})$.  
	
	Considered as an object of $\topos_\theory$, the formula $(y=y)$, i.e.\ the true formula with one free variable, is the \emph{pre-bound} of the topos $\topos_\theory$ (in the terminology of \cite[Definition D3.2.3]{elephant}) given by the choice of theory (see \cite[Theorem D3.2.5]{elephant}, and recall that we are assuming that $\theory$ is single-sorted).  Next, given a set $\Psi$ of geometric formulae, each $\psi(\vec{x}) \in \Psi$ yields an object of the syntactic category and hence of the classifying topos $\topos_\theory$.  We will therefore also use $\Psi$ to denote their coproduct $\coprod_{\psi \in \Psi} \psi(\vec{x})$ in $\topos_\theory$ (note that this object may land outside of the syntactic category).
	\begin{prop}\label{prop:etendue-implies-coord}
		Let $\theory$ be a geometric theory and let $\Psi$ be a set of formulae.  The datum of a uniform co-ordinatisation of $\theory$ over $\Psi$ is equivalent to a covering by subterminals of the product projection $\Psi \times (y=y) \to \Psi$, considered as an object of the slice topos $\topos_\theory / \Psi$.
	\end{prop}
	\begin{proof}
		Suppose that the product projection $\Psi \times (y=y) \to \Psi$ admits a covering by subterminals, i.e.\ there is a jointly epimorphic family of maps to $\Psi \times (y=y) \to \Psi$ whose domains are subterminals.  Note that the subterminals of $\topos_\theory / \Psi$ are the subobjects of $\Psi$ in $\topos_\theory$.  Without loss of generality, we can assume that the covering of $\Psi \times (y=y) \to \Psi$ by subterminals factors through the covering by subobjects
		\[\textstyle \lrset{\psi(\vec{x}) \land (y=y) \subseteq \coprod_{\psi \in \Psi} \psi(\vec{x}) \land (y=y) \cong \Psi \times (y=y)}{\psi \in \Psi}.\]  
		That is, for each $\psi \in \Psi$, we ask for a jointly epimorphic family of morphisms $\sett{[\theta_j]}{j \in J_\psi}$ making the diagram
		\[
		\begin{tikzcd}
			& \psi(\vec{x}) \land ( y = y) \ar{d} \ar[hook]{r} & \coprod_{\psi \in \Psi} \psi(\vec{x}) \land (y=y) \ar{d} \\
			\psi_j(\vec{x}) \ar{ru}{[\theta_j]} \ar[hook]{r} & \psi(\vec{x}) \ar[hook]{r} & \coprod_{\psi \in \Psi} \psi(\vec{x})
		\end{tikzcd}
		\]
		of morphisms in $\topos_\theory$ commute, where $\psi_j(\vec{x})$ describes a subobject of $\psi(\vec{x})$, i.e.\ a formula such that $\theory$ proves $\psi_j(\vec{x}) \vdash \psi(\vec{x})$ (see \cite[Lemma D1.4.4]{elephant}).  To be completely precise, as the syntactic category is a full subcategory of $\topos_\theory$, we are asking for a family of provably functional formulae $\sett{\theta_j(\vec{x},\vec{x}',y)}{j \in J_\psi}$ for which the diagram
		\begin{equation}\label{eq:subobj-diag}
			\begin{tikzcd}
				& \psi(\vec{x}') \land (y=y) \ar{d}{[\psi(\vec{x}') \land \psi(\vec{x}'') \land (y=y) \land \vec{x}' = \vec{x}'']} \\
				\psi_j(\vec{x}) \ar{ru}{[\theta_j(\vec{x},\vec{x}',y)]} \ar{r}[']{[\psi_j(\vec{x}) \land \vec{x} = \vec{x}'']} & \psi(\vec{x}'')
			\end{tikzcd}
		\end{equation}
		commutes (see \cite[Lemma D1.4.2]{elephant} for the syntactic description of the product projection) and which is moreover jointly epimorphic.  The commutativity of \cref{eq:subobj-diag} ensures that $\theory$ proves the sequent $\theta_j(\vec{x},\vec{x}',y) \vdash \vec{x} = \vec{x}'$, and so we can safely replace $\theta_j$ with the formula $\theta'_j(\vec{x},y) \equiv \theta_j(\vec{x},\vec{x},y)$.  In summary, for each $\psi \in \Psi$, there is a set of formulae $\sett{[\theta'_j]}{j \in J_\psi}$ for which $\theory$ proves the sequents
		\begin{align*}
			\theta'_j(\vec{x},y) & \vdash \psi(\vec{x}) & \text{(as $\theta_j$ is well-defined)}, \\
			\theta'_j(\vec{x},y) \land \theta'_j(\vec{x},y') & \vdash y = y' & (\text{as $\theta_j$ is functional}), \\
			\psi(\vec{x}) \land (y=y) & \vdash \textstyle \bigvee_{j \in J_\psi}  \theta'_j(\vec{x},y) & \text{(as $\sett{[\theta_j]}{j \in J_\psi}$ is jointly epimorphic)},
		\end{align*}
		and so we conclude that $\theory$ is uniformly co-ordinatised over $\Psi$.  For the converse, we simply reverse the above construction to observe that if $\theory$ is uniformly co-ordinatised over $\Psi$, then the product projection $\Psi \times (y =y ) \to \Psi \in \topos_\theory / \Psi$ is covered by subterminals.
	\end{proof}
	Recall that a topos is localic if and only if every object admits a covering by subterminals (\cite[Theorem 5.37]{topos}).  Thus, if $\topos_\theory/\Psi$ is localic, then in particular the product projection $\Psi \times (y=y) \to \Psi$ is covered by subterminals and so, by \cref{prop:etendue-implies-coord}, $\theory$ is uniformly co-ordinatised over $\Psi$.  This is a substantial component of the proof that the classifying topos $\topos_\theory$ is an \'etendue if and only if $\theory$ is uniformly co-ordinatisable.  The remaining steps involve combining facts about an arbitrary topos $\ftopos$.
	
	We first observe that if $\ftopos$ is an \'etendue over an object $A$, then $\ftopos$ is an \'etendue over any cover of $A$.  This is a consequence of the following fact concerning slice topoi, which effectively asserts that a slice topos $\ftopos/A$ is the topos of sheaves on the \emph{internal discrete locale} given by the object $A$.  Recall from \cite[\S 1]{fact1} that a geometric morphism $f \colon \mathcal{G} \to \ftopos$ is \emph{localic} if every object of $\mathcal{G}$ is a subquotient of an object in the image of the inverse image part $f^\ast$. 
	\begin{lem}\label{lem:etale_is_localic}
		An \'etale geometric morphism $\pi_A \colon \ftopos/A \to \ftopos$ is localic.
	\end{lem}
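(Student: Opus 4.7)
The plan is to verify the defining condition of a localic geometric morphism directly. Recall that a morphism is localic when every object of the domain appears as a subquotient of the inverse image of some object of the codomain, and that the inverse image functor $\pi_A^*$ sends $B \in \topos$ to the product projection $(B \times A \to A) \in \topos/A$.

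First I would take an arbitrary object of $\topos/A$, i.e.\ a morphism $\alpha \colon X \to A$ in $\topos$, and consider the graph morphism $\langle \id_X , \alpha \rangle \colon X \to X \times A$. Viewed as a morphism in $\topos/A$, this is a morphism from $(X \xrightarrow{\alpha} A)$ to $\pi_A^*(X) = (X \times A \xrightarrow{p_A} A)$, since $p_A \circ \langle \id_X , \alpha \rangle = \alpha$. The next step is to observe that $\langle \id_X , \alpha \rangle$ is a split monomorphism in $\topos$, with retraction given by the first projection $p_X \colon X \times A \to X$, and split monomorphisms are monomorphisms; since monomorphisms in $\topos/A$ are exactly the morphisms which are monic in $\topos$, this exhibits $(X \to A)$ as a subobject — and hence in particular a subquotient — of $\pi_A^*(X)$.

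I would then conclude that every object of $\topos/A$ lies in the image of $\pi_A^*$ up to subquotient, which is the defining condition of a localic geometric morphism. There is no real obstacle here; the only subtle point worth flagging is the verification that the graph morphism really does land in $\pi_A^*(X)$ as a morphism over $A$, and that monomorphisms in the slice are detected in $\topos$, which is a standard fact about slices of toposes.
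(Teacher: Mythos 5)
Your argument is correct and is essentially the paper's own proof: both exhibit an object $\alpha \colon X \to A$ of $\topos/A$ as a subobject of $\pi_A^\ast(X)$ via the graph monomorphism $\langle \id_X, \alpha \rangle \colon X \to X \times A$. The extra care you take in checking that the graph is a morphism over $A$ and that monomorphisms in the slice are detected in $\topos$ is welcome but does not change the route.
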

	\begin{proof}
		Given an object in $\ftopos/A$, i.e.\ an arrow $f \colon B \to A$ in $\ftopos$, then taking the \emph{graph} $G_f$ of $f$ yields the diagram
		\begin{equation}\label{eq:graph}
		\begin{tikzcd}
			B \ar{rd}[']{f} \ar[hook]{r}{G_f} & B \times A \ar{d} \\
			& A,
		\end{tikzcd}
		\end{equation}
		and so $f \in \ftopos/A$ is a subobject of an object in the inverse image $\pi_A^\ast \colon \ftopos \to \ftopos/A$.  Hence, $\pi_A$ is localic.
	\end{proof}
	\begin{coro}\label{coro:etendue_over_A}
		If $\ftopos$ is an \'etendue over an object $A \in \ftopos$, then for any epimorphism $B \twoheadrightarrow A$, the topos $\ftopos$ is also an \'etendue over $B$.
	\end{coro}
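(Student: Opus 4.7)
My plan is to reduce the claim to two easy facts: that an étale morphism is localic (which is \cref{lem:etale_is_localic}) and that localic morphisms compose.

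First, I would check the inhabitedness condition: if $B \twoheadrightarrow A$ is an epimorphism and $A \to 1$ is an epimorphism (because $\topos$ is an étendue over $A$), then the composite $B \to A \to 1$ is again epi, so $B$ is inhabited. This part is immediate.

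Next, the substantive point. A morphism $f \colon B \to A$ in $\topos$ corresponds to an object of $\topos/A$, and there is a standard equivalence $\topos/B \simeq (\topos/A)/f$. The projection $(\topos/A)/f \to \topos/A$ is an étale geometric morphism, hence localic by \cref{lem:etale_is_localic}. By hypothesis, $\topos/A$ is equivalent to a topos of sheaves on a locale, so the canonical geometric morphism $\topos/A \to \Set$ is localic. Since the composition of localic geometric morphisms is localic (\cite[Proposition A4.6.6]{elephant}), the composite
\[
\topos/B \simeq (\topos/A)/f \longrightarrow \topos/A \longrightarrow \Set
\]
is localic, and so $\topos/B$ is a localic topos. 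Combined with the inhabitedness of $B$, this shows $\topos$ is an étendue over $B$.

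There is no real obstacle here; the argument is essentially bookkeeping once one observes that iterating the slice construction exhibits $\topos/B$ as an étale-slice of the localic topos $\topos/A$. The only point worth being careful about is invoking the correct form of the slice equivalence $\topos/B \simeq (\topos/A)/(B\to A)$, but this is a standard fact about slices in any category with pullbacks, and in particular in a topos.
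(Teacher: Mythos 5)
Your argument is correct and follows the same route as the paper: inhabitedness via composing epimorphisms, the slice equivalence $\topos/B \simeq (\topos/A)/f$, and \cref{lem:etale_is_localic}. The only cosmetic difference is that where you explicitly compose localic morphisms down to $\Set$, the paper cites the packaged fact that a topos admitting a localic morphism to a localic topos is itself localic (\cite[Theorem C1.4.7]{elephant}) --- these are the same argument.
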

	\begin{proof}
		As $g \colon B \twoheadrightarrow A$ is an epimorphism, so is the composite $B \twoheadrightarrow A \twoheadrightarrow 1$.  Next, the \'etale geometric morphism $\ftopos/B \simeq (\ftopos/A)/g \to \ftopos/A$ is localic by \cref{lem:etale_is_localic}, and so $\ftopos/B$ admits a localic morphism to a localic topos and is therefore localic  (\cite[Theorem C1.4.7]{elephant}).
	\end{proof}

	Next, we note the following consequences of the fact that monomorphisms and jointly epimorphic families are stable under pullback in a topos (cf.\ \cite[\S A1.3]{elephant}).
	\begin{lem}\label{lem:pbs_of_cov}
		Let $\ftopos$ be any topos with objects $A, B \in \ftopos$.
		\begin{enumerate}
			\item\label{enum:subobjs_still_cov} If $A$ is covered by subterminals, so is any subobject $A' \subseteq A$;
			\item\label{enum:prod_still_cov} If $A, B$ are covered by subterminals, so is the product.
		\end{enumerate}
	\end{lem}
	\begin{proof}
		For \cref{enum:subobjs_still_cov}, if $\sett{f_i \colon U_i \to A}{i \in I}$ is a cover of $A$, i.e.\ a jointly epimorphic family of morphisms, whose domains are all subterminals $U_i \subseteq 1$, we obtain a cover of a subobject $A' \subseteq A$ by taking the pullbacks
		\[\begin{tikzcd}
			{U_i'} & {U_i} & 1 \\
			{A'} & A,
			\arrow[hook, from=1-1, to=1-2]
			\arrow["{f'_i}"', from=1-1, to=2-1]
			\arrow["\lrcorner"{anchor=center, pos=0.125}, draw=none, from=1-1, to=2-2]
			\arrow[hook, from=1-2, to=1-3]
			\arrow["{f_i}", from=1-2, to=2-2]
			\arrow[hook, from=2-1, to=2-2]
		\end{tikzcd}\]
		which remains a cover because jointly epimorphic families are stable under pullback, and the domains remain subterminals because monomorphisms are stable under pullback.
		
		For \cref{enum:prod_still_cov}, if $\sett{f_i \colon U_i \to A}{i \in I}$ and $\sett{g_j \colon V_j \to B}{j \in J}$ are covers with $U_i, V_j \hookrightarrow 1$ subterminals, then the family $\sett{f_i \times g_j \colon U_i \times V_j \to A \times B}{i \in I, j \in J}$, consisting of the universally induced maps
		\[\begin{tikzcd}
			{U_i \times V_j} & {U_i \times B} & {U_i} \\
			{A \times V_j} & {A \times B} & A \\
			{V_j} & B & 1,
			\arrow[from=1-1, to=1-2]
			\arrow[from=1-1, to=2-1]
			\arrow["\lrcorner"{anchor=center, pos=0.125}, draw=none, from=1-1, to=2-2]
			\arrow["{f_i \times g_j}", between={0.35}{1}, dashed, from=1-1, to=2-2]
			\arrow[from=1-2, to=1-3]
			\arrow[from=1-2, to=2-2]
			\arrow["\lrcorner"{anchor=center, pos=0.125}, draw=none, from=1-2, to=2-3]
			\arrow["{f_i}", from=1-3, to=2-3]
			\arrow[from=2-1, to=2-2]
			\arrow[from=2-1, to=3-1]
			\arrow["\lrcorner"{anchor=center, pos=0.125}, draw=none, from=2-1, to=3-2]
			\arrow[from=2-2, to=2-3]
			\arrow[from=2-2, to=3-2]
			\arrow["\lrcorner"{anchor=center, pos=0.125}, draw=none, from=2-2, to=3-3]
			\arrow[from=2-3, to=3-3]
			\arrow["{g_j}"', from=3-1, to=3-2]
			\arrow[from=3-2, to=3-3]
		\end{tikzcd}\]
		is still a cover by subterminals.  The families $\sett{f_i \times \id_B \colon U_i \times B \to A  \times B}{i \in I}$ and, for each $i \in I$, $\sett{\id_{U_i}\times g_j \colon U_i \times V_j \to A \times B}{j \in J}$ are jointly epimorphic by pullback stability, and hence so is the family of composites $\sett{f_i \times g_j}{i \in I, j \in J}$.   The domain $U_i \times V_j$ is a subterminal, indeed it is the meet of $U_i, V_j$ as subterminals since the outer square is a pullback.
	\end{proof}	
	\begin{prop}\label{prop:prebound_cov_implies_localic}
		Let $\ftopos$ be a topos with a pre-bound $P \in \ftopos$.  For an object $A \in \ftopos$, the slice $\ftopos / A$ is localic if and only if the product projection $P \times A \to A$, viewed as an object of $\ftopos / A$, is covered by subterminals.
	\end{prop}
	\begin{proof}
		One direction is immediate: if $\ftopos / A$ is localic, then in particular the product projection $P \times A \to A$ must be covered by subterminals.  For the converse, we first show that $P \times A \to A \in \ftopos / A$ being covered by subterminals implies that any product projection $B \times A \to A$ is covered by subterminals.  As $P$ is a pre-bound for $\ftopos$, every object $B$ admits a cover by subobjects of finite powers of $P$, i.e.\ there is a jointly epimorphic family of arrows $\sett{f_i \colon U_i \to B}{i \in I}$ with $U_i \subseteq P^{n_i}$.  Hence there is a jointly epimorphic family of arrows
		\[\begin{tikzcd}
			{U_i \times A} & {P^{n_i} \times A} \\
			{B \times A} & A.
			\arrow[hook, from=1-1, to=1-2]
			\arrow["{f_i \times \id_A}"', from=1-1, to=2-1]
			\arrow[from=1-2, to=2-2]
			\arrow[from=2-1, to=2-2]
		\end{tikzcd}\]
		The product projection $U_i \times A \to A$ is a subobject of $P^{n_i} \times A \to A$, which in turn is the $n_i$-fold product of $P \times A \to A \in \ftopos / A$.  Thus, by \cref{lem:pbs_of_cov} the projection $U_i \times A \to A$ is covered by subterminals, and hence so is $B \times A \to A$.
		
		Now note that for an arbitrary object $f \colon B \to A$ of $\ftopos/A$, by taking the graph of $f$ as in \cref{eq:graph}, $f \colon B \to A$ is a subobject of the product projection $B \times A \to A$, and so by a further application of \cref{lem:pbs_of_cov} we have that $f \colon B \to A$ is covered by subterminals.  Thus, $\ftopos / A$ is localic as desired.
	\end{proof}
	Finally, we can combine the above results to deduce the following, which completes the equivalence of conditions \cref{enum:coord} and \cref{enum:etendue} in our main theorem.
	\begin{coro}\label{coro:etendue_implies_coord_over_set}
		Let $\theory$ be a geometric theory.  The classifying topos $\topos_\theory$ is an \'etendue if and only if $\theory$ is uniformly co-ordinatised over some inhabited set of formulae $\Psi$.
	\end{coro}
	\begin{proof}
		Suppose that $\topos_\theory$ is an \'etendue over an inhabited object $A \in \topos_\theory$.  Since the objects of the form $\phi(\vec{x})$ generate the topos $\topos_\theory$, the object $A$ admits an epimorphism $\coprod_{\psi \in \Psi} \psi(\vec{x}) \twoheadrightarrow A$ for some family of geometric formulae $\Psi$.  Thus, the composite $\coprod_{\psi \in \Psi} \psi(\vec{x}) \twoheadrightarrow A \twoheadrightarrow 1$ is an epimorphism, from which we deduce that $\theory$ proves the sequent $\top \vdash \bigvee_{\psi \in \Psi} \exists \vec{x}. \ \psi(\vec{x})$.  Additionally, by \cref{coro:etendue_over_A}, $\topos_\theory$ is an \'etendue over the coproduct $\Psi \cong \coprod_{\psi \in \Psi} \psi(\vec{x})$, in particular $\topos_\theory{/}\Psi$ is localic, and so $\theory$ is uniformly co-ordinatised over $\Psi$ by \cref{prop:etendue-implies-coord}.
		
		Conversely, if $\theory$ is uniformly co-ordinatised over an inhabited set of geometric formulae $\Psi$, then $\Psi$ considered as an object of $\topos_\theory$ is inhabited, and moreover the slice $\topos_\theory/\Psi$ is a localic topos by \cref{prop:etendue-implies-coord} and \cref{prop:prebound_cov_implies_localic}.  Hence, $\topos_\theory$ is an \'etendue as desired.
	\end{proof}
	\begin{ex}[Continuous flat functors on a monic site]
		Let $(\cat,J)$ be a \emph{monic site}, i.e.\ a category $\cat$ in which every arrow is a monomorphism equipped with a Grothendieck topology $J$.  In \cite[\S 1]{rosenthal_monic}, it is shown that the topos of sheaves $\Sh(\cat,J)$ on a monic site is an étendue, while the converse is shown in \cite{mono_site}.  The topos $\Sh(\cat,J)$ classifies the geometric theory of \emph{continuous flat functors on} $(\cat,J)$ (see \cite[Theorem 2.1.11]{TST}).  As a single-sorted theory, this has the following description:
		\begin{enumerate}
			\item\label{enum:flat:inhab} For each object $c \in \cat$, we add a unary relation symbol $R_c(x)$ and the axioms
			\[\textstyle
			(x=x) \vdash \bigvee_{c \in \cat} R_c(x), \quad \top \vdash \bigvee_{c \in \cat} \exists x. \ R_c(x).
			\]
			\item For each arrow $f \colon c \to d$ in $\cat$, we add a binary relation and axioms expressing the graph of a function from $R_c(x)$ to $R_d(y)$.  We will abuse notation and denote this binary relation by $\overline{f}(x) = y$, as though it derived from a function symbol, and similarly write $\overline{f}(x) = \overline{g}(z)$ for $\exists y. \ ( \overline{f}(x) = y \land y = \overline{g}(z))$, etc.  We also add the axioms
			\[
			R_c(x) \vdash \overline{\id_c}(x) = x, \quad R_c(x) \vdash \overline{h} \ \overline{f} (x) = \overline{h \circ f}(x),
			\]
			for every identity arrow and every pair of composable arrows $f \colon c \to d, h \colon d \to e$ in $\cat$.
			\item\label{enum:flat:flat} For every pair of objects $a, b \in \cat$ and every pair of parallel arrows $f, g \colon a \to b$ in $\cat$, we add the axioms
			\begin{align*}
				R_a(x) \land R_b(y) &  \vdash \bigvee_{\substack{f : c \to a , \\ g : c \to b \, \in \, \cat}} \exists z. \ ( \overline{f}(z) = x \land \overline{g}(z) = y), \\
				\overline{f}(x) = \overline{g}(x) & \vdash \bigvee_{\substack{h : c \to a \, \in \, \cat \\ \text{s.t.} \, f \circ h = g \circ h}} \exists z. \ (  \overline{h}(z) = x).
			\end{align*}
			\item For every $J$-covering sieve $S$ on $c \in \cat$, we add the geometric axiom
			\[\textstyle 
			R_c(x) \vdash \bigvee_{f \in S} \exists y. \ \overline{f}(y) = x.
			\]
		\end{enumerate}
		Note that, since every map in $\cat$ is a monomorphism, and those finite limits that exist in $\cat$ are preserved by flat functors, the image of every arrow in $\cat$ under a flat functor is still a monomorphism, and thus the theory of continuous flat functors proves the sequent $\overline{f}(x) = \overline{f}(y) \vdash x = y$ for every $f \in \cat$.
		
		Equipped with this axiomatisation, we can explicitly demonstrate that the theory of continuous flat functors on a monic site $(\cat,J)$ is uniformly co-ordinatised.  We take the set of geometric formulae $\Psi $ over which the theory is uniformly co-ordinatised to be $ \sett{R_a(x)}{a \in \cat}$, which is inhabited by the second axiom in \cref{enum:flat:inhab}.  For each $R_a(x) \in \Psi$, the family of geometric formulae $\Theta_{R_a(x)}$ witnessing the uniform co-ordinatisation is given by 
		\[\sett{R_a(x) \land R_b(y) \land \exists z. \ ( \overline{f}(z) = x \land \overline{g} (z) = y)}{b \in \cat, f \colon c \to a \in \cat, g \colon c \to b \in \cat}.\]
		Indeed, we have the necessary derivations required by \cref{df:coord}, e.g.\ the sequent
		\[
		R_a(x) \land (y=y) \vdash \bigvee_{b \, \in \, \cat} \bigvee_{\substack{f : c \to a , \\ g : c \to b \, \in \, \cat}} R_a(x) \land R_b(y) \land \exists z. \ ( \overline{f}(z) = x \land \overline{g} (z) = y)
		\]
		follows from the first axioms in \cref{enum:flat:inhab} and \cref{enum:flat:flat}, while that the formula
		\begin{multline*}
			(R_a(x) \land R_b(y) \land \exists z. \ (  \overline{f}(z) = x \land \overline{g} (z) = y)) \\ \land (R_a(x) \land R_b(y') \land \exists z'. \ ( \overline{f}(z') = x \land \overline{g} (z') = y'))
		\end{multline*}
		entails $y =y'$ follows from the fact that the theory of continuous flat functors on a monic site proves that $\overline{f}(z) = x = \overline{f}(z') \vdash z = z' \vdash y = \overline{g}(z) = \overline{g}(z') = y'$.
	\end{ex}
	\section{\'Etendues and uniform rigidity}\label{sec:rigid}
	Having demonstrated the equivalence between our syntactic condition, uniform co-ordination, and the property of having an \'etendue as a classifying topos, we turn to describing our equivalent semantic condition.  Since a theory $\theory$ that is uniformly co-ordinatised over a set of formulae $\Psi$ is, in particular, co-ordinatised over $\Psi$, if follows by Gaifman's co-ordinatisation theorem that every (set-based) model $M \vDash \theory$ is rigid over $\Psi(M)$.  Given the special nature of a uniform co-ordinatisation, it is not too surprising that $M$ is rigid in a stronger sense.
	\begin{ex}\label{ex:rigid_vect}
		Consider again the theory $\Vectff$ of finite dimensional $F$-vector spaces. In addition to the discussion in \cref{ex:vectff}, there is another way that a model $V \vDash \Vectff$, satisfying $V \vDash \exists \vec{x}. \ B_n(\vec{x})$, is determined by any witness $(v_1,\dots,v_n) \in V^n$ of $B_n(\vec{x})$.   Given any other $W \vDash \Vectff$ with $W \vDash B_n(w_1, \dots , w_n)$, there is a unique isomorphism $V \cong W$ that sends $(v_1, \dots , v_n) \in V^n$ pointwise to $(w_1, \dots , w_n) \in W^n$.  Our notion of \emph{uniform rigidity} abstracts this property.
	\end{ex}
	\begin{df}\label{df:rigid} Let $\theory$ be a geometric theory, $\Psi$ be a set of geometric formulae over its language, and let $M \vDash \theory$ be a model.
		\begin{enumerate}
			\item We say that $M$ is \emph{uniformly rigid} over $\Psi$ if, for any element $\vec{m} \in \Psi(M)$, any other model $M' \vDash \theory$ and $\vec{m}' \in \Psi(M')$, there is at most one isomorphism of models $M \cong M'$ that maps $\vec{m}$ pointwise to $\vec{m}'$.  In other words, the only automorphism of the model $M$ that fixes $\vec{m}$ pointwise is the identity.
			\item We shall say that $M$ is \emph{strongly uniformly rigid} over $\Psi$ if, for any element $\vec{m} \in \Psi(M)$, any other model $M' \vDash \theory$ and element $\vec{m}' \in \Psi(M')$, there is at most one homomorphism of models $M \to M'$ that maps $\vec{m}$ pointwise to $\vec{m}'$.
		\end{enumerate}
	\end{df}
	We are required to consider \emph{strong} uniform rigidity because we are working in a positive setting, as the following example illustrates.  However, when restricting to the classical setting, the distinction between uniform rigidity and strong uniform rigidity is erased (\cref{prop:classical_strong_rigid_is_rigid}).
	\begin{ex}
		Clearly, strong uniform rigidity implies uniform rigidity.  The converse is not true, as demonstrated in the following example suggested by the anonymous reviewer.  
		Take the theory consisting of one unary function symbol $\sigma$ and the single axiom $(x=x) \land (y=y) \vdash x = y \lor x= \sigma(y) \lor y = \sigma(x)$.  The model $\{0,1\}$ with $\sigma(0) = \sigma(1) = 0$ is uniformly rigid but not strongly uniformly rigid.
	\end{ex}
	\begin{rem}\label{rem:model_in_any_topos}
		The notion of uniform rigidity given in \cref{def:relcat} makes sense in any topos $\ftopos$.  Given a model $M \vDash \theory$ internal to $\ftopos$ (see \cite[\S D1.2]{elephant} for how to interpret $\theory$ inside an arbitrary topos), we can still construct the coproduct $\Psi(M)$, and we take $\vec{m} \in \Psi(M)$ to mean a \emph{global element} $\vec{m} \colon 1 \to \Psi(M)$.  Note that, in an arbitrary topos, such a global element need not necessarily factor through any of the coproduct inclusions $\psi(M) \hookrightarrow \Psi(M)$.  As a simple example, for a set $I$, a model of $\Vectff$ in $\Set^I$ consists of an $I$-indexed family of finite dimensional $F$-vector spaces $\{V_i\}_{i \in I}$, but there is no need for $V_i$ and $V_{i'}$ to have the same dimension.
	\end{rem}
	\begin{df}\label{def:relcat}
		Thus, we will say that a geometric theory $\theory$ is \emph{uniformly rigid} over $\Psi$ in a topos $\ftopos$ (respectively, \emph{strongly uniformly rigid}) if every model internal to $\ftopos$ is (strongly) uniformly rigid over $\Psi$.  We say that $\theory$ is (\emph{strongly}) \emph{uniformly rigid} in $\ftopos$, without qualification, if there exists such a set of geometric formulae $\Psi$, which is inhabited, for which $\theory$ is (strongly) uniformly rigid over $\Psi$ in $\ftopos$.
	\end{df}
	We will see that, given a \emph{coherent} (respectively, \emph{classical}) theory $\theory$, its classifying topos $\topos_\theory$ is an \'etendue if and only if $\theory$ is strongly uniformly rigid (resp., uniformly rigid) in the topos of sets (\cref{thm:mainthm_coherent} and \cref{coro:mainthm_classical}).  For an analogous result concerning arbitrary geometric theories, we will require further conditions, as elaborated below.
	\subsection{Types in a topos}\label{sec:types}
	We abstract the notion of a model-theoretic type to the topos-theoretic setting.  Recall that, in the set-based setting, an $n$-type $\type$ of a geometric theory $\theory$ consists of a \emph{completely prime filter} of the frame of geometric formulae with $n$ free variables.  Given a formula in context $\psi(\vec{x})$ (with $n$ free variables say), we make the further distinction and define a $\psi(\vec{x})$-type to be an $n$-type which contains $\psi(\vec{x})$.  Note that these bijectively correspond to completely prime filters of the frame of geometric formulae in $n$ free variables that prove $\psi(\vec{x})$.	Recall also that such a $\psi(\vec{x})$-type is \emph{realised} in a set-based model if there is a model $M \vDash \theory$ and an $n$-tuple $\vec{m} \in \psi(M)$ such that $\type = \sett{\phi(\vec{x})}{M \vDash \phi(\vec{m})} $.  Similarly, given a set of geometric formulae $\Psi$, by a $\Psi$-type we mean a $\psi(\vec{x})$-type for some $\psi(\vec{x}) \in \Psi$, and say that a $\Psi$-type is realised if it is realised as a $\psi(\vec{x})$-type in the sense above. 
	
	Note that the frame of geometric formulae with $n$ free variables that prove $\psi(\vec{x})$ is just the subobject lattice $\Sub_{\topos_\theory}(\psi(\vec{x}))$, when $\psi(\vec{x})$ is viewed as an object of the classifying topos $\topos_\theory$ (\cite[Lemma D1.4.4]{elephant}).  Thus, we can equate $\psi(\vec{x})$-types with frame homomorphisms $ \Sub_{\topos_\theory}(\psi(\vec{x})) \to 2$ to the two element frame $2 = \{\bot,\top\}$, since these correspond to the completely prime filters on $\Sub_{\topos_\theory}(\psi(\vec{x}))$.  Given a model $M \vDash \theory$, by taking the interpretation of geometric formulae in $M$ we obtain a frame homomorphism $\Sub_{\topos_\theory}(\psi(\vec{x})) \to \power(\psi(M))$ to the powerset of $\psi(M)$.  Recall that the $n$-tuples $\vec{m} \in \psi(M)$ correspond to the frame homomorphisms $\power(\psi(M)) \to 2$, where a tuple $\vec{m} \in \psi(M)$ is identified with the map that sends $U \subseteq \psi(M)$ to $\top \in 2$ if $\vec{m} \in U$, and $\bot$ otherwise.  Therefore, a $\psi(\vec{x})$-type, written as a frame homomorphism $\type \colon \Sub_{\topos_\theory}(\psi(\vec{x})) \to 2$, is realised in a model $M \vDash \theory$ if and only if it factors as
	\[\begin{tikzcd}
		\Sub_{\topos_\theory}(\psi(\vec{x})) \ar{r} \ar[bend left]{rr}{\type} & \power(\psi(M)) \ar[dashed]{r} & 2.
	\end{tikzcd}\]
	Analogously, a $\Psi$-type corresponds to a frame homomorphism $\type \colon \Sub_{\topos_\theory}(\Psi) \to 2$, and $\type$ is realised in a model $M$ if and only if $\type$ factors through the interpretation morphism $\Sub_{\topos_\theory}(\Psi) \to \power(\Psi(M))$.  Thus, the following is a natural generalisation of the notion of model-theoretic type to an arbitrary topos.
	\begin{df}\label{df:realise-types}
		Let $\theory$ be a geometric theory with classifying topos $\topos_\theory$, and let $\Psi$ be a set of geometric formulae.  
		\begin{enumerate}
			\item Given a topos $\ftopos$, by a $\Psi$-\emph{type in} $\ftopos$, we mean a frame homomorphism 
			\[\type  \colon \Sub_{\topos_\theory}(\Psi) \to \Sub_\ftopos(1).\]
			\item We say that the $\Psi$-type $\type$ in $\ftopos$ is \emph{realised} if there exists a geometric morphism $M \colon \ftopos \to \topos_\theory$ (i.e.\ a model of $\theory$ internal to $\ftopos$) and a global element ${m} \colon 1 \to M^\ast \Psi$ such that $\type$ is the composite frame homomorphism
			\[
			\begin{tikzcd}
				\Sub_{\topos_\theory}(\Psi) \ar{r}{M^\ast} & \Sub_\ftopos(M^\ast \Psi) \ar{r}{{m}^\ast} & \Sub_\ftopos(1),
			\end{tikzcd}
			\]
			where $M^\ast \colon \Sub_{\topos_\theory}(\Psi) \to \Sub_\ftopos(M^\ast \Psi)$ is inverse image of $M$ acting on monomorphisms, and ${m}^\ast \colon \Sub_\ftopos(M^\ast \Psi) \to \Sub_\ftopos(1)$ denotes the homomorphism that sends a subobject $U \subseteq M^\ast \Psi$ to its pullback along ${m} \colon 1 \to M^\ast \Psi$.
		\end{enumerate}
	\end{df}
	Since we are working within a \emph{positive} context, the model-theoretic types are not discretely ordered.  That is, there could exist a pair $\type, \type'$ of distinct $\psi(\vec{x})$-types where $\type$ is properly contained in $\type'$.  Suppose that $\type$ and $\type'$ are $\psi(\vec{x})$-types that are realised by tuples $\vec{m} \in M \vDash \theory$ and $\vec{n} \in N \vDash \theory$.  If $f \colon M \to N$ is a homomorphism of models for which $f(\vec{m}) = \vec{n}$, then since $f$ preserves satisfaction of formulae, it follows that $\type \subseteq \type'$.  We therefore say that the inclusion $\type \subseteq \type'$ is \emph{realised} by the homomorphism $f$, or in the context of an arbitrary topos:
	\begin{df}
		Given a topos $\ftopos$, let $\type, \type' \colon \Sub_{\topos_\theory}(\Psi) \to \Sub_{\ftopos}(1)$ be a pair of $\Psi$-types in $\ftopos$. 
		\begin{enumerate}
			\item We say that $\type$ is \emph{included} in $\type'$ if $\type(U) \leqslant \type'(U)$ for all $U \in \Sub_{\topos_\theory}(\Psi)$ (i.e.\ $\type \leqslant \type'$ in the \emph{specialisation order} on the poset $\Frm(\Sub_{\topos_\theory}(\Psi) , \Sub_{\ftopos}(1)) $).
			\item Suppose that $\type, \type'$ are realised, in the sense of \cref{df:realise-types}, by respective geometric morphisms $M, N \colon \ftopos \to \topos_\theory$ and global elements $m \colon 1 \to M^\ast \Psi$ and $n \colon 1 \to N^\ast \Psi$.   We say that an inclusion $\type \subseteq \type'$ is \emph{realised} in $\ftopos$ if there is a natural transformation $f \colon M^\ast \Rightarrow N^\ast$ (i.e.\ a homomorphism $f \colon M \to N$ of the corresponding internal models) such that $f$ maps $m$ onto $n$, i.e.\ the diagram
			\[
			\begin{tikzcd}[row sep=tiny]
				& M^\ast \Psi \ar{dd}{f_{\Psi}} \\
				1 \ar[bend left = 20]{ru}{m} \ar[bend right = 20]{rd}[']{n} & \\
				& N^\ast \Psi
			\end{tikzcd}
			\]
			commutes.
		\end{enumerate}
	\end{df}
	\subsection{\'Etendues are uniformly rigid}
	We now prove that the classifying topos of a geometric theory is an \'etendue if and only if there is an inhabited set of geometric formulae over which the theory is both strongly uniformly rigid and realises all types and inclusions of types (in any topos).
	Recall that, for a topos $\ftopos$, the unique geometric morphism $\gamma \colon \ftopos \to \Set$ factorises (uniquely up to isomorphism) as a hyperconnected morphism followed by a localic morphism $\ftopos \to \Sh(\Sub_\ftopos(1)) \to \Set$, and moreover the topos $\ftopos$ is localic if and only if the hyperconnected part $\ftopos \to \Sh(\Sub_\ftopos(1))$ is an equivalence (see \cite[\S A4.6]{elephant}; the morphism $\ftopos \to \Sh(\Sub_\ftopos(1))$ is the unit of the \emph{localic reflection}).  
	
	In particular, if $\theory$ is a geometric theory and $\Psi$ is a set of geometric formulae, then we obtain a hyperconnected morphism $\topos_\theory / \Psi \to \Sh(\Sub_{\topos_\theory / \Psi}(1)) \simeq \Sh(\Sub_{\topos_\theory}(\Psi))$, which we denote by $h \colon \topos_\theory / \Psi \to \Sh(\Sub_{\topos_\theory}(\Psi))$.
	\begin{prop}\label{prop:psi_types}
		Let $\theory$ be a geometric theory and $\Psi$ a collection of geometric formulae.  For a given topos $\ftopos$,
		\begin{enumerate}
			\item\label{enum:psi_types_are_geom} the poset of $\Psi$-types in $\ftopos$ is given by the category of geometric morphisms $\Topos(\ftopos,\Sh(\Sub_{\topos_\theory}(\Psi)))$,
			\item\label{enum:psi_types_realised} every $\Psi$-type in $\ftopos$ is realised if and only if the functor
			\[
			h \circ - \colon \Topos(\ftopos,\topos_\theory/\Psi) \to \Topos(\ftopos,\Sh(\Sub_{\topos_\theory}(\Psi))),
			\]
			induced by post-composition with the morphism $h \colon \topos_\theory/\Psi \to \Sh(\Sub_{\topos_\theory}(\Psi))$, is surjective on objects,
			\item\label{enum:inclsuibs_realised} every inclusion of $\Psi$-types in $\ftopos$ is realised if and only if the functor $h \circ -$ is full,
			\item\label{enum:h_faithful_iff_rigid} and $\theory$ is strongly uniformly rigid over $\Psi$ in the topos $\ftopos$ if and only if $h \circ -$ is faithful.
		\end{enumerate}
	\end{prop}
	\begin{proof}
		For \cref{enum:psi_types_are_geom}, recall that $ \Topos(\ftopos,\Sh(\Sub_{\topos_\theory}(\Psi))) \simeq \Frm(\Sub_{\topos_\theory}(\Psi),\Sub_\ftopos(1))$ (see, for instance, \cite[Proposition IX.5.3]{SGL}), where the latter is precisely the poset of $\Psi$-types in $\ftopos$ and their inclusions.  
		
		For \cref{enum:psi_types_realised}, recall from \cite[Proposition 5.12]{SGA4-3} (see \cite{jens} for an exposition in English) that a point $\ftopos \to \topos_\theory/\Psi$ corresponds to a pair $(M \colon \ftopos \to \topos_\theory, m \colon 1 \to M^\ast \Psi)$, i.e.\ a model $M \vDash \theory$ in $\ftopos$ and a global element of $\Psi(M)$.  The functor $h \circ - $ sends such a point to the frame homomorphism given by the composite
		\[
		\Sub_{\topos_\theory}(\Psi) \xrightarrow{M^\ast} \Sub_\ftopos(M^\ast \Psi) \xrightarrow{m^\ast} \Sub_\ftopos(1).
		\]
		Thus, the functor $h \circ -$ is surjective on objects if and only if every frame homomorphism $\Sub_{\topos_\theory}(\Psi) \to \Sub_\ftopos(1)$ is obtained from some pair $(M \colon \ftopos \to \topos_\theory, m \colon 1 \to M^\ast \Psi)$ as above, i.e.\ if and only if every $\Psi$-type in $\ftopos$ is realised.  
		
		Next, \cref{enum:inclsuibs_realised} follows by a similar argument.  Given a pair of points 
		\begin{equation}\label{eqn:pair_of_points}
		(M \colon \ftopos \to \topos_\theory, m \colon 1 \to M^\ast \Psi), (N \colon \ftopos \to \topos_\theory, n \colon 1 \to N^\ast \Psi) \colon \ftopos \rightrightarrows \topos_\theory / \Psi,
	\end{equation}
		we recall that a transformation between these points consists of a transformation $f \colon M^\ast \Rightarrow N^\ast$ such that the diagram
		\begin{equation}\label{eqn:homo_of_points}
		\begin{tikzcd}[row sep=tiny]
			& M^\ast \Psi \ar{dd}{f_{\Psi}} \\
			1 \ar[bend left = 20]{ru}{m} \ar[bend right = 20]{rd}[']{n} & \\
			& N^\ast \Psi
		\end{tikzcd}
		\end{equation}
		commutes.  By the naturality of $f$, or equivalently expressed the fact that the homomorphism $f$ preserves satisfaction of formulae, we have an inequality in the specialisation order 
		\[\begin{tikzcd}[row sep=tiny]
			& {\Sub_\ftopos(M^\ast \Psi)} \\
			{\Sub_{\topos_\theory}(\Psi)} \\
			& {\Sub_\ftopos(N^\ast \Psi)},
			\arrow["{f_\Psi^\ast}"', to=1-2, from=3-2]
			\arrow[""{name=0, anchor=center, inner sep=0}, "{M^\ast}", bend left = 20, from=2-1, to=1-2]
			\arrow["{N^\ast}"', bend right = 20, from=2-1, to=3-2]
			\arrow["{\large\leqslant}"{marking, allow upside down}, draw=none, shift right=2, to=3-2, from=0]
		\end{tikzcd}\]
		where $f_\Psi^\ast$ is the map that takes the pullback of a subobject along $f_\Psi \colon M^\ast \Psi \to N^\ast \Psi$.
		The functor $h \circ -$ sends such the transformation $f \colon (M \colon \ftopos \to \topos_\theory, m \colon 1 \to M^\ast \Psi) \Rightarrow (N \colon \ftopos \to \topos_\theory, n \colon 1 \to N^\ast \Psi)$ to the inequality $m^\ast M^\ast \leqslant n^\ast N^\ast$ in the pasting
		\[\begin{tikzcd}[row sep=tiny]
			& {\Sub_\ftopos(M^\ast \Psi)} \\
			{\Sub_{\topos_\theory}(\Psi)} && {\Sub_\ftopos(1)}.\\
			& {\Sub_\ftopos(N^\ast \Psi)}
			\arrow["{f_\Psi^\ast}"', to=1-2, from=3-2]
			\arrow[""{name=0, anchor=center, inner sep=0}, "{M^\ast}", bend left = 20, from=2-1, to=1-2]
			\arrow["{N^\ast}"', bend right = 20, from=2-1, to=3-2]
			\arrow["{\large\leqslant}"{marking, allow upside down}, draw=none, shift right=2, to=3-2, from=0]
			\arrow["{m^\ast}", bend left = 20, from=1-2, to=2-3]
			\arrow["{n^\ast}"', bend right = 20, from=3-2, to=2-3]
		\end{tikzcd}\]
		Thus, the functor $h \circ -$ is full if and only if every inequality between $m^\ast M^\ast \leqslant n^\ast N^\ast$ is obtained from a transformation $f \colon M^\ast \Rightarrow N^\ast$ such that $f_\Psi \circ m = n$, i.e.\ if and only if the inclusions of realised $\Psi$-types are also realised.
		
		Finally, for \cref{enum:h_faithful_iff_rigid}, note that since $\Topos(\ftopos,\Sh(\Sub_{\topos_\theory}(\Psi)))$ is a poset, the functor $h \circ -$ is faithful if and only if, for any pair of points as in \cref{eqn:pair_of_points}, there is at most one transformation of these points as in \cref{eqn:homo_of_points}.  But this precisely expresses the assertion that, for any pair of models $M, N \vDash \theory$ with global elements $m \colon 1 \to \Psi(M)$ and $n \colon 1 \to \Psi(N)$, there is at most one homomorphism $f \colon M \to N$ that sends $m$ to $n$, i.e.\ $\theory$ is strongly uniformly rigid over $\Psi$ in the topos $\ftopos$.
	\end{proof}
	\begin{thm}[Characterisation of \'etendues]\label{thm:mainthm_geometric}
		Let $\topos$ be a topos and let $\theory$ be a geometric theory classified by $\topos$.  The following are equivalent:
		\begin{enumerate}[label = (\Alph*)]
			\item\label{enum:mainthm_geom:coord} there exists an inhabited set of geometric formulae $\Psi$ such that $\theory$ is uniformly co-ordinatised over $\Psi$;
			\item\label{enum:rigid_and_types} there exists an inhabited set of geometric formulae $\Psi$ such that, for every topos $\ftopos$, the theory $\theory$ is strongly uniformly rigid over $\Psi$ in $\ftopos$, and all $\Psi$-types and their inclusions in $\ftopos$ are realised;
			\item\label{enum:mainthm_geom:etendue} $\topos$ is an \'etendue.
		\end{enumerate}
	\end{thm}
	\begin{proof}
		The equivalence \cref{enum:mainthm_geom:coord} $\iff$ \cref{enum:mainthm_geom:etendue} was proved in \cref{coro:etendue_implies_coord_over_set}.  For the equivalence \cref{enum:rigid_and_types} $\iff$ \cref{enum:mainthm_geom:etendue}, recall from \cref{coro:etendue_over_A} (cf.\ also the proof of \cref{coro:etendue_implies_coord_over_set}) that $\topos$ is an \'etendue if and only if there is some inhabited set of geometric formulae $\Psi$ for which $\topos/\Psi$ is localic.  By the above discussion, the topos $\topos/\Psi$ is localic if and only if the canonical morphism $h \colon \topos/\Psi \to \Sh(\Sub_\topos(\Psi))$ to its localic reflection is an equivalence.  The geometric morphism $h$ is an equivalence if and only if, for any topos $\ftopos$, post-composing with $h$, i.e.\ the functor
		\[
		h \circ - \colon \Topos(\ftopos,\topos/\Psi) \to \Topos(\ftopos,\Sh(\Sub_{\topos}(\Psi))),
		\]
		is an equivalence of categories.  By \cref{prop:psi_types}, the functor $h \circ -$ is essentially surjective if and only if every $\Psi$-type in $\ftopos$ is realised, full if and only if every inclusion of $\Psi$-types in $\ftopos$ is realised, and faithful if and only if $\theory$ is strongly uniformly rigid over $\Psi$  in $\ftopos$, thus completing the proof.	
	\end{proof}
	\begin{rem}
		The condition regarding realisation of types and their inclusions in \cref{enum:rigid_and_types} in \cref{thm:mainthm_geometric} is necessary: Example D3.1.15 in \cite{elephant} gives an example of a topos $\topos$ where, for any other topos $\ftopos$, the category $\Topos(\ftopos,\topos)$ is a preorder, but nonetheless $\topos$ is not localic.  Thus, this topos $\topos$ is strongly uniformly rigid over the terminal object $1 \in \topos$, but it is not an \'etendue (over $1$).  We can, however, drop the condition regarding types, and more, if our topos is known to be coherent (see \cref{thm:mainthm_coherent}).
	\end{rem}
	\begin{ex}[G-torsors]\label{ex:torsors}
		Recall from \cite[\S VIII]{SGL} that the \'etendue $\Set^G$ classifies $G$-\emph{torsors}, i.e.\ the free and transitive $G$-actions on inhabited objects (in the topos $\Sh(X)$ on a space, these are precisely the \emph{principal $G$-bundles}).  Explicitly, this is the single-sorted theory with a unary function symbol $\overline{g}$ for each $g \in G$ and the axioms:
		\[
		\top \vdash\overline{e} x = x, \quad \top  \vdash \overline{g} \, \overline{g}' x = \overline{(gg')}x, \quad \top  \vdash \exists x. \ (x=x), \] \[
		(y=y) \land (x=x)  \vdash \bigvee_{g \in G} y = \overline{g}x, \ \text{ and } \ 
		\overline{g} x = x  \vdash \bot \ \text{for all $g \in G\setminus e$.}
		\]
		The $G$-torsors in $\Set$ are all essentially the set $G$ with its canonical $G$-action, but where we have `forgotten' which element was the identity element.  As soon as one element of the set $G$ is fixed, the rest of the structure is fixed too, i.e.\ the set-based $G$-torsors are (strongly) uniformly rigid over the formula $(x=x)$.  The same remains true for $G$-torsors in an arbitrary topos.  Indeed, the formulae $\sett{(y = \overline{g}x)}{g \in G}$ uniformly co-ordinate the theory of $G$-torsors over $(x=x)$.
	\end{ex}
	\begin{ex}
		We have seen already that the theory $\Vectff$ of finite dimensional vector spaces over the finite field $F$ is uniformly co-ordinatised.  Thus, the classifying topos of $\Vectff$ is an \'etendue.  Indeed, we can recognise that the classifying topos of ${\Vectff}$ is given by the presheaves on the groupoid $\coprod_{n \in \N} \text{GL}_n(F)$, i.e.\ the disjoint union of the general linear groups over $F$ (for instance, using the techniques established in \cite{myselfgrpds}), and presheaves on groupoids are étendues.  Therefore, $\Vectff$ is strongly uniformly rigid over the set of formulae $\sett{B_n(\vec{x})}{n \in \N}$, as we would expect (cf.\ \cref{ex:rigid_vect}).
	\end{ex}
	\begin{rem}[Relative categoricity]
		A geometric theory $\theory$ satisfying the conditions of \cref{thm:mainthm_geometric} evidently also satisfies a uniform variant of \emph{rigid, relative categoricity}, strongly resembling \cite[Proposition 7.3]{relcat}.  Namely, let $M \vDash \theory$ and $N \vDash \theory$ be a pair of models in a topos $\ftopos$, and suppose we are given (global) elements $\vec{m} \in \Psi(M)$ and $\vec{n} \in \Psi(N)$ such that the $\Psi$-type of $\vec{m}$ is included in the $\Psi$-type of $\vec{n}$, then there is a unique homomorphism of models $f \colon M \to N$ that sends $\vec{m}$ pointwise to $\vec{n}$.
		
		If $\vec{m}$ and $\vec{n}$ have the same $\Psi$-type, then there is also a homomorphism $g \colon N \to M$ sending $\vec{m}$ pointwise to $\vec{n}$.  Thus, by uniform rigidity over $\Psi$, the homomorphism $g$ is an inverse to $f$, and so the two models $M, N$ are isomorphic.  Thus, any model of $\theory$ is determined, up to isomorphism, by the type of any witness of $\Psi$.
	\end{rem}
	\subsection{Coherent and classical theories}\label{sec:coherent}
	If we restrict to coherent theories, or classical theories, the conditions of \cref{thm:mainthm_geometric} simplify, giving a closer parallel to Gaifman's original co-ordinatisation result.
	\begin{thm}[Characterisation of coherent \'etendues]\label{thm:mainthm_coherent}
		Let $\topos$ be a coherent topos, and let $\theory$ be a coherent theory classified by $\topos$.  The following are equivalent:
		\begin{enumerate}[label = (\Alph*)]
			\item $\theory$ is uniformly co-ordinatised,
			\item $\theory$ is strongly uniformly rigid in sets,
			\item $\topos$ is an \'etendue.
		\end{enumerate}
	\end{thm}
	\begin{proof}
		By \cref{thm:mainthm_geometric}, we only need to show that if $\theory$ is strongly uniformly rigid in sets, then $\topos$ is an \'etendue.  For this implication, we apply \cite[Corollary D3.5.11]{elephant}, which asserts that a coherent topos $\ftopos$ is localic if and only if its category of set-points $\Topos(\Set,\ftopos)$ is a preorder.  Let $\Psi$ be an inhabited set of (geometric) formulae over which $\theory$ is strongly uniformly rigid in sets.  Initially, we show that $\Psi$ can be replaced by a finite set of coherent formulae.  Firstly, each geometric formula $\psi \in \Psi$ is, up to logical equivalence, a join of coherent formulae $\psi(\vec{x}) \equiv \bigvee_{\phi \in \Phi_\psi} \phi(\vec{x})$.  As $\theory$ proves the sequent $\top \vdash \bigvee_{\psi \in \Psi} \exists \vec{x}. \ \psi(\vec{x}) \equiv \bigvee_{\psi \in \Psi} \bigvee_{\phi \in \Phi_\psi} \exists \vec{x}. \ \phi(\vec{x})$, and since $\theory$ is coherent, by compactness there is a finite set of coherent formulae $\Psi' \subseteq \bigcup_{\psi \in \Psi} \Phi_\psi$ for which $\theory$ proves $\top \vdash \bigvee_{\phi \in \Psi'} \exists \vec{x}. \ \phi(\vec{x})$.  Next, for any model $M \vDash \theory$, an element $\vec{m} \in \Psi'(M)$ is also an element of $\Psi(M)$, since evidently $\Psi'(M) \subseteq \Psi(M)$.  Thus, we clearly see that, as $\theory$ is strongly uniformly rigid in sets over $\Psi$, it is also strongly uniformly rigid over $\Psi'$.  
		
		Viewed as an object of the coherent topos $\topos$, $\Psi'$ is a finite coproduct of \emph{coherent objects} (see \cite[\S D3.3]{elephant}), and hence coherent itself by \cite[Lemmas D3.3.3 \& D3.3.6]{elephant}.  Therefore, the slice $\topos/\Psi'$ is also a coherent topos by \cite[Lemma D3.3.16]{elephant}. The assumption that $\theory$ is strongly uniformly rigid in sets over $\Psi'$ precisely expresses that given any two pairs $(M \colon \Set \to \topos, m \colon 1 \to M^\ast \Psi')$ and $(N \colon \Set \to \topos, n \colon 1 \to N^\ast \Psi')$, equivalently described as a pair of models $M, N \vDash \theory$ and elements $m \in \Psi'(M)$ and $n \in \Psi'(N)$, there is at most one transformation $f \colon M^\ast \Rightarrow N^\ast$ making the diagram
		\[
		\begin{tikzcd}[row sep=tiny]
			& M^\ast \Psi \ar{dd}{f_{\Psi}} \\
			1 \ar[bend left = 20]{ru}{m} \ar[bend right = 20]{rd}[']{n} & \\
			& N^\ast \Psi
		\end{tikzcd}
		\] 
		commute, or in other words $\Topos(\Set,\topos/\Psi')$ is a preorder.  Thus, $\topos/\Psi'$ is a localic topos, completing the proof that $\topos$ is an \'etendue.
	\end{proof}
	Finally, we make the connection with Gaifman's co-ordinatisation theorem manifest in the classical setting via the following result, recovering the statement of the theorem given in the introduction.
	\begin{prop}\label{prop:classical_strong_rigid_is_rigid}
		If $\theory$ is a classical theory that is uniformly rigid in $\Set$, then $\theory$ is strongly uniformly rigid in $\Set$.
	\end{prop}
	\begin{proof}
		Recall that, for classical theories, the homomorphisms of models we consider are \emph{elementary embeddings} (cf.\ \cite[Lemma D1.5.13]{elephant}). Let $M, M' \vDash \theory$ be (set-based) models, let $\vec{m} \in \Psi(M)$ and $\vec{m}' \in \Psi(M')$, and suppose we have a pair of elementary embeddings $f, g \colon M \rightrightarrows M'$ that both send $\vec{m}$ pointwise to $\vec{m}'$.  We aim to show that $f = g$.
		
		We distinguish between two cases.  The first case, where $M$ is finite, is vacuous: note that any elementary embedding out of a finite structure is a bijection, and hence an isomorphism of models, since we can express the cardinality of the model $M$ in first-order logic.  Thus, we apply uniform rigidity to deduce that $f=g$.
		
		Now suppose that $M$ is infinite (we will actually observe, in \cref{coro:finite_models}, that this case becomes redundant).  Let $\kappa$ be a cardinal larger than $|M|$ and $|M'|$.  Using \cite[Theorem 10.2.1]{hodges}, we can elementarily embed $M$ and $M'$ into a strongly $\kappa$-homogeneous model $N \vDash \theory$.  By strong homogeneity, both $f$ and $g$ extend to automorphisms $\tilde{f}, \tilde{g} \colon N \rightrightarrows N$, as in the diagram
		\[
		\begin{tikzcd}
			\vec{m} \ar[maps to]{d} &[-30pt] \in &[-30pt] M \ar[shift right=2]{d}[']{f} \ar[shift left=2]{d}{g} \ar[hook]{r} & N \ar[shift right=2,dashed]{d}[']{\tilde{f}} \ar[shift left=2,dashed]{d}{\tilde{g}} \\
			\vec{m}' &[-30pt] \in &[-30pt] M' \ar[hook]{r} & N.
		\end{tikzcd}
		\]
		Now $\tilde{f}$ and $\tilde{g}$ are automorphisms of the model $N$ that both send $\vec{m} \in \Psi(M) \subseteq \Psi(N)$ pointwise to $\vec{m}' \in \Psi(M') \subseteq \Psi(N)$.  Thus, by the uniform rigidity of $\theory$, we have that $\tilde{f} = \tilde{g}$, and so $f = g$.
	\end{proof}
	\begin{coro}\label{coro:mainthm_classical}
		For a classical theory $\theory$ with classifying topos $\topos$, the following are equivalent:
			\begin{enumerate}[label = (\Alph*)]
			\item $\theory$ is uniformly co-ordinatised,
			\item $\theory$ is uniformly rigid in sets,
			\item $\topos$ is an \'etendue.
		\end{enumerate}
	\end{coro}
	\begin{coro}\label{coro:finite_models}
		If $\theory$ is a classical (respectively, coherent) theory that is (resp., strongly) uniformly rigid in sets, then every set-based model of $\theory$ is finite.
	\end{coro}
	\begin{proof}
		The statement follows from \cref{thm:mainthm_coherent} and \cref{rem:finite_models}.
	\end{proof}
	\section{Consequences}\label{sec:appl}
	We now discuss the interplay of our logical characterisation with some previously known facts regarding \'etendues.  In the first example application (\cref{prop:small_set_of_points}), the logical perspective allows someone unfamiliar with the techniques of topos theory to verify a fact concerning \'etendues (\cref{rem:small_points_via_logic}).  In the second example, we translate a result of topos theory (Rosenthal's theorem \cite{rosenthal2}) into a non-trivial result about geometric logic (\cref{coro:rosenthal_logical}).
	\subsection{\'Etendues have a small set of points}\label{sec:points}
	The set-based models of $\Vectff$ are all finite by design, but the essentially unique set-based $G$-torsor is infinite for $G$ infinite.  This highlights a key difference between geometric logic and (finitary) first-order logic: the upwards L\"owenheim-Skolem theorem for the latter implies that a theory with an infinite model has unboundedly many.  Consequently uniformly co-ordinatised theories with infinite models are somewhat unnatural from a classical model-theoretic point of view since a uniformly co-ordinatised theory only ever has a small set of set-based models, or equivalently an \'etendue always has a small set of points.   We first give the topos-theorist's proof of this well-known fact, before mentioning how it can be deduced logically.
	\begin{prop}\label{prop:small_set_of_points}
		Any \'etendue has a small set of points up to isomorphism.
	\end{prop}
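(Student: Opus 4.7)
The plan is to use the defining inhabited object $A$ of the \'etendue to transfer the smallness of points from the (localic) slice $\topos/A$ to $\topos$ itself, via the \'etale geometric morphism $\pi_A \colon \topos/A \to \topos$.

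First, I would recall that a localic topos has a small set of points up to isomorphism: if $\topos/A \simeq \Sh(L)$ for some locale $L$, then points $\Set \to \Sh(L)$ correspond to frame maps $\mathcal{O}(L) \to \{0,1\}$ (equivalently, completely prime filters on $\mathcal{O}(L)$), and these form a set bounded by $|\mathcal{O}(L)|$. In particular, the collection of isomorphism classes of points of $\topos/A$ is small (indeed, the poset of points of a localic topos is discrete on a set, cf.\ the specialisation description used in \cref{lem:no-isos-implies-localic}).

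Next, I would recall the description of points of a slice: a point of $\topos/A$ is the same datum as a pair $(p, a)$ where $p \colon \Set \to \topos$ is a point of $\topos$ and $a \colon 1 \to p^\ast A$ is a global element (see the paragraph opening the proof of \cref{prop:urr-implies-etendue}). This assembles into a functor $\pi_A \circ (-) \colon {\bf Topos}(\Set,\topos/A) \to {\bf Topos}(\Set,\topos)$ sending $(p,a) \mapsto p$. The key observation is that this functor is essentially surjective: given any point $p$ of $\topos$, the inverse image $p^\ast$ preserves finite limits and arbitrary colimits, hence preserves epimorphisms, so the hypothesis that $A \to 1$ is an epimorphism in $\topos$ implies that $p^\ast A \to p^\ast 1 = 1$ is an epimorphism in $\Set$, i.e.\ $p^\ast A$ is an inhabited set. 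Choosing any element of $p^\ast A$ produces a lift $(p,a)$.

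Hence the map from isomorphism classes of points of $\topos/A$ to isomorphism classes of points of $\topos$ is surjective, and since the domain is small, so is the codomain. The only mild subtlety — and the step I would be most careful about — is checking that the lifting uses only the existence of a global element of $p^\ast A$ in $\Set$, which requires genuine choice of an element rather than merely inhabitedness in the topos; this is unproblematic because $\Set$ satisfies choice, and is precisely where the transition from ``$A$ is inhabited in $\topos$'' to ``every fibre of points is inhabited in $\Set$'' takes place.
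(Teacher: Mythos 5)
Your proposal is correct and follows essentially the same route as the paper: both arguments reduce to the smallness of the points of the localic slice $\topos/A$ and then lift an arbitrary point $p$ of $\topos$ through $\pi_A$ by observing that $p^\ast A$ is a non-empty set, so that a choice of element (equivalently, a section of the pullback $\Set/p^\ast A \to \Set$) exhibits the required factorisation. The only cosmetic difference is that the paper phrases the lift via the pullback square of the étale surjection along $p$, whereas you phrase it via the explicit description of points of a slice as pairs $(p,a)$; these are the same argument.
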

	\begin{proof}
		Let $\topos$ be an \'etendue over an object $A \in \topos$.  Note that, since $\topos/A$ is localic, it has a small set of points (up to isomorphism).  Consequently, there is a small set of points of $\topos$ that factor through $\topos/A \to \topos$.  We show that any point $p \colon \Set \to \topos$ factors through $\topos/A \to \topos$.  Since surjective \'etale geometric morphisms are stable under pullback, we have that
		\[
		\begin{tikzcd}
			\Set/Y \ar{r} \ar{d} & \topos/A \ar{d} \\
			\Set \ar{r}{p} & \topos
		\end{tikzcd}
		\]
		is a pullback for some non-empty set $Y$, specifically $Y \cong p^\ast A$, and hence the morphism $\Set/Y \to \Set$ admits a section, induced by any section of the support $Y \twoheadrightarrow 1$.  Therefore, $p$ factors as $\Set \to \Set/Y \to \topos/A \to \topos$.
	\end{proof}
	\begin{rem}\label{rem:small_points_via_logic}
		We could have also deduced the statement of \cref{prop:small_set_of_points} from the uniform co-ordinatisability of any theory classified by an \'etendue.  If $\theory$ is uniformly co-ordinatised over $\Psi$, then every element of a set-based model $M \vDash \theory$ is the unique witness of a formula $\theta(\vec{m},y)$ with $\vec{m} \in \psi(M) \subseteq \Psi(M)$ and $\theta \in \Theta_\psi$.  Let $K$ be the maximal cardinality of the sets $\lrset{\Theta_\psi}{\psi \in \Psi}$ of formulae witnessing the uniform co-ordinatisation of $\theory$ over $\Psi$.  Then any set-based model of $\theory$ has at most $K$ elements, and so $\theory$ has a small set of models up to isomorphism.
	\end{rem}
	\subsection{Reducible expansions}\label{sec:rosenthal}
	First proved in \cite[Theorem 3.1]{rosenthal2}, and later simplified in \cite{rosenthal} using a result of Freyd \cite{freyd}, Rosenthal shows that every topos is a \emph{hyperconnected quotient} of an \'etendue.  As a final application, we give the logical translation of Rosenthal's result: every geometric theory admits a \emph{reducible, conservative expansion} to a uniformly co-ordinatisable theory.  We have already seen that \'etendues correspond to uniformly co-ordinatisable theories, and so it remains describe the equivalence between hyperconnected quotients and reducible, conservative expansions.  This latter observation is found in \cite[\S 7]{TST}.
	\begin{df} 
		Let $\theory$ be a (single-sorted) geometric theory over a signature $\Sigma$.  By an \emph{expansion} of $\theory$, we mean a second (single-sorted) geometric theory $\theory^+$ over a signature $\Sigma^+$ and a choice of:
		\begin{enumerate}
			\item a unary formula $\phi(x)$ in $\Sigma^+$,
			\item and, for each geometric formula $\psi(\vec{x})$ over the signature $\Sigma$, a formula $\psi^+(\vec{x})$ over $\Sigma^+$ for which $\theory^+$ proves the sequent $\psi^+(\vec{x}) \vdash \phi(x_1) \land \dots \land \phi(x_n)$,
		\end{enumerate} 
		such that, moreover, for each consequence $\psi(\vec{x}) \vdash \chi(\vec{x})$ of $\theory$, the theory $\theory^+$ proves the sequent $\psi^+(\vec{x}) \vdash \chi^+(\vec{x})$.  
	\end{df}
	Thus, if $\theory^+$ is an expansion of $\theory$ as above, then for each model $M \vDash \theory^+$, the definable subset $\phi(M) \subseteq M$ carries the structure of a $\theory$-model (hence the terminology \emph{expansion}).  In \cite[\S 2]{hodgespillay}, expansions are called \emph{relativised expansions}.
	\begin{df}
		Let $\theory^+$ be an expansion of $\theory$.
		\begin{enumerate}
			\item We say that the expansion is \emph{conservative} if, given geometric formulae $\psi(\vec{x}),\chi(\vec{x})$ over $\Sigma$, whenever $\theory^+$ proves the sequent $\psi^+(\vec{x}) \vdash \chi^+(\vec{x})$, then $\theory$ proves the sequent $\psi(\vec{x}) \vdash \chi(\vec{x})$.
			\item (\cite[\S 2]{hodgespillay}) We say that the expansion has the \emph{reduction property} (called the $\emptyset$-reduction property in \cite{reduction_prop}), or that the expansion is \emph{reducible}, if for every geometric formula $\xi(\vec{x})$ over $\Sigma^+$, there is a formula $\xi^-(\vec{x})$ over $\Sigma$ such that there is a provable equivalence of formulae
			\[
			\xi(\vec{x}) \land \phi(x_1) \land \dots \land \phi(x_n) \equiv (\xi^-)^+(\vec{x}),
			\]
			modulo the theory $\theory^+$.
		\end{enumerate}
	\end{df}
	\begin{rem}
		In the multi-sorted context discussed in \cite[\S 7]{TST}, Caramello's \emph{hyperconnected expansions} play the role of reducible, conservative expansions.  We have elected to follow the adjectives used by the model theory community (cf.\ \cite{hodgespillay,reduction_prop}).
	\end{rem}
	\begin{prop}[Theorem 7.1.5 \cite{TST}]
		Let $f \colon \ftopos \to \topos$ be a hyperconnected geometric morphism.  For any theory $\theory$ classified by $\topos$, there is a theory $\theory^+$ classified by $\ftopos$ that is a conservative, reducible expansion of $\theory$.
	\end{prop}
	\begin{proof}
		This is just a combination of \cite[Theorem 7.1.5]{TST} and \cite[Lemma D1.4.13]{elephant}.
	\end{proof}
	Thus, by combining our characterisation of the theories classified by an étendue with \cite[Theorem 3.1]{rosenthal2}, we obtain the following.
	\begin{coro}\label{coro:rosenthal_logical}
		Every geometric theory admits a reducible, conservative expansion to a uniformly co-ordinatisable theory.
	\end{coro}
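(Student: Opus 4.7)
The plan is to assemble the corollary from the three ingredients already made available in the paper. Given a geometric theory $\theory$, I would first pass to its classifying topos $\topos_\theory$. By Rosenthal's theorem (\cite[Theorem 3.1]{rosenthal2}), there exists an étendue $\ftopos$ together with a hyperconnected geometric morphism $f \colon \ftopos \to \topos_\theory$ exhibiting $\topos_\theory$ as a hyperconnected quotient of $\ftopos$.

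Next, I would invoke the lemma recalled just above the corollary (the combination of \cite[Theorem 7.1.5]{TST} and \cite[Lemma D1.4.13]{elephant}): applied to $f$ and to the theory $\theory$ classified by $\topos_\theory$, it furnishes a single-sorted geometric theory $\theory^+$ classified by $\ftopos$ that is a reducible, conservative expansion of $\theory$. Finally, since $\ftopos$ is an étendue and classifies $\theory^+$, the main theorem (specifically the implication \cref{enum:etendue}~$\Rightarrow$~\cref{enum:coord}, proved as \cref{prop:etendue-implies-coord} and \cref{coro:etendue_implies_coord_over_set}) yields that $\theory^+$ is uniformly co-ordinatisable.

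There is no real obstacle here beyond bookkeeping: every substantive piece has been established earlier in the paper or cited from the literature, so the proof is essentially a one-line concatenation of Rosenthal's theorem, the hyperconnected expansion lemma, and \cref{maintheorem}. The only point worth remarking on is that the expansion produced by \cite[Theorem 7.1.5]{TST} is a priori multi-sorted, which is why the appeal to \cite[Lemma D1.4.13]{elephant} is needed to return to the single-sorted framework in which uniform co-ordinatisability was formulated in \cref{df:coord}.
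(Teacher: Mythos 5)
Your proof is correct and follows exactly the route the paper intends: the corollary is obtained by concatenating Rosenthal's theorem, the hyperconnected-expansion lemma stated immediately above it, and the implication \cref{enum:etendue} $\Rightarrow$ \cref{enum:coord} of the main theorem. Your closing remark about the role of \cite[Lemma D1.4.13]{elephant} in reducing to the single-sorted setting matches the paper's own phrasing of that lemma.
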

	\begin{ex}
		We give an explicit description of a reducible, conservative expansion to a uniformly co-ordinatisable theory in the case for the theory $\Dtheory_\infty$ of \emph{infinite decidable objects}, i.e.\ the theory over the signature with one binary relation $\neq$ and, as axioms, the sequents
		\begin{align*}
			x = x' \land x \neq x' & \vdash \bot, \\
			\top & \vdash x = x' \lor x \neq x', \\
			\top & \vdash \exists y . \ (y \neq x_1) \land \dots \land (y \neq x_n) & \text{for all $n \in \N$.}
		\end{align*}
		Recall that $\Dtheory_\infty$ is classified by the \emph{Schanuel topos}, which can be described as the topos ${\bf B}\Omega(\N)$ of continuous actions on discrete sets by the topological group $\Omega(\N)$ of permutations of the naturals $\N$, viewed as an amorphic countable set, equipped with the \emph{pointwise convergence topology} (see, for instance, \cite[Example D3.4.10]{elephant}).  On the other hand, the topos $\Set^{\Omega(\N)}$ of all $\Omega(\N)$-actions on sets, continuous and non-continuous, classifies the geometric theory of $\Omega(\N)$-torsors (\cref{ex:torsors}).  The inclusion of continuous $\Omega(\N)$-actions into all actions is the inverse image of a geometric morphism $\Set^{\Omega(\N)} \to {\bf B}\Omega(\N)$ which is moreover hyperconnected, as is easily checked (see also \cite[p.~225]{elephant}).  As it stands, this geometric morphism does not yield the theory of $\Omega(\N)$-torsors as a reducible, conservative expansion of the theory of decidable objects.  We can instead replace the theory of $\Omega(\N)$-torsors with a \emph{Morita equivalent} theory, i.e.\ a geometric theory with the same classifying topos.

		Consider the following (single-sorted) geometric theory $\Dtheory_\infty^+$.
		\begin{enumerate}
			\item We have two unary relation symbols, $T(x)$ and $D(x)$, along with the axioms
			\[
			\top \vdash T(x) \lor D(x) , \quad T(x) \land D(x) \vdash \bot, \quad \top \vdash \exists x. \ T(x).
			\]
			\item For each $\sigma \in \Omega(\N)$, we add an endomorphism $\overline{\sigma}$ of $T(x)$ and the axioms
			\[
			T(x) \vdash\overline{e} x = x, \quad T(x)  \vdash \overline{\sigma} \, \overline{\sigma}' x = \overline{(\sigma \sigma')}x, \] \[
			T(x) \land T(y) \vdash \bigvee_{\sigma \in \Omega(\N)} y = \overline{\sigma}x, \ \text{ and } \ 
			\overline{\sigma} x = x  \vdash \bot \ \text{for all $\sigma \in \Omega(\N)\setminus e$.}
			\]
			Thus, $T(x)$ is endowed with the structure of an $\Omega(\N)$-torsor.
			\item We add a binary relation $\neq_D$ and the axioms
			\begin{align*}
				x \neq_D x' & \vdash D(x) \land D(x'), \\
				D(x) \land D(x') \land x = x' \land x \neq_D x' & \vdash \bot, \\
				D(x) \land D(x') & \vdash x = x' \lor x \neq_D x', \\
				D(x_1) \land \dots \land D(x_n) & \vdash \exists y . \ D(y) \land (y \neq_D x_1) \land \dots \land (y \neq_D x_n) 
			\end{align*}
			for all $n \in \N$, expressing that the equality predicate restricted to $D(x) \land D(x')$ is decidable and $D(x)$ is infinite.
			\item We add a function $q$ from $T(x)$ to $D(x)$ along with the axioms
			\begin{align*}
				D(x) & \vdash \exists y. \ T(y) \land q(y) = x, \\
				q(y) = q(y')  \dashv &\vdash T(y) \land T(y') \land \bigvee_{\sigma \in \stab(0)} y = \overline{\sigma} (y'), \\
				q(y) \neq_D q(y') \dashv & \vdash T(y) \land T(y') \land \bigvee_{\sigma \not \in \stab(0)} y = \overline{\sigma} (y'),
			\end{align*}
			where $\stab(0)$ denotes the stabilizer of the element $0 \in \N$ (since $\N$ is amorphic, we could have chosen any element).  In other words, $D(x)$ is the quotient of $T(x)$ by the definable equivalence relation $x \sim x' \equiv \bigvee_{\sigma \in \stab(0)}x = \overline{\sigma} (x')$, and moreover the interpretation of $\neq_D$ is fixed by this quotient.
		\end{enumerate}
		As the interpretation of $D(x)$ and $\neq_D$ in any model is fixed by the interpretation of the $\Omega(\N)$-torsor $T(x)$, the theory $\Dtheory_\infty^+$ is Morita equivalent to the theory of $\Omega(\N)$-torsors, i.e.\ the classifying topos of $\Dtheory_\infty^+$ is given by $\Set^{\Omega(\N)}$.  Indeed, the \emph{pre-bound} of $\Set^{\Omega(\N)}$ corresponding to the single-sorted theory $\Dtheory_\infty^+$ (under the bijection in \cite[Theorem D3.2.5]{elephant}) is given by the object $\Omega(\N) + \Omega(\N)/\stab(0)$, equipped with the obvious $\Omega(\N)$-action (whereas the pre-bound corresponding to the theory of $\Omega(\N)$-torsors is just $\Omega(\N)$ equipped with the canonical action).
		
		Thus, $\Dtheory_\infty^+$ is uniformly co-ordinatised, as can be seen explicitly: the theory is uniformly co-ordinatised over the inhabited formula $T(x)$, with the formulae witnessing the uniform co-ordinatisation given by
		\[
		\sett{T(x) \land T(y) \land (y = \overline{\sigma}(x))}{\sigma \in \Omega(\N)}
		\cup \sett{T(x) \land D(y) \land (y = q (\overline{\sigma}(x)))}{\sigma \in \Omega(\N)}.
		\]
		The theory $\Dtheory_\infty$ admits an evident expansion to $\Dtheory_\infty^+$, given by the choice of the unary formula $D(x)$ and taking $(x \neq x')^+ $ to be $x \neq_D x'$.  This expansion is reducible and conservative.  This can be seen topos-theoretically via \cite[Theorem 7.1.3]{TST}, as the geometric morphism corresponding to the expansion (in the sense of \cite[\S 7.1]{TST}) is the aforementioned hyperconnected morphism $\Set^{\Omega(\N)} \to {\bf B}\Omega(\N)$.  In more detail, the pre-bound $\Omega(\N) / \stab(0)$ of ${\bf B}\Omega(\N)$, corresponding to the theory $\Dtheory_\infty$ classified by ${\bf B}\Omega(\N)$, is sent by the inverse image functor to the subobject $\Omega(\N)/\stab(0) \subseteq \Omega(\N) + \Omega(\N)/\stab(0)$, which witnesses the expansion of $\Dtheory_\infty$ into $\Dtheory_\infty^+$.
	\end{ex}

	\subsection*{Acknowledgements}
	I am grateful to Ryuya Hora who initially asked me the question ``what theories do \'etendues classify''? Jonas Frey, Morgan Rogers, and the anonymous reviewer deserve mention for pointing out mistakes in earlier drafts.  Finally, I thank Sam van Gool, Ivan Toma\v{s}i\'c, Behrang Noohi and Rui Prezado for suggesting improvements to the paper.  This work was supported by Agence Nationale de la Recherche (ANR), project ANR-23-CE48-0012-01.
	
	\smallskip
	
	\epigraph{\it 
		J'\'etais au bord du vide, comme dans un r\^eve.  Il y a l'\'etendue d'ombre, on peut tomber.}{Le Cl\'ezio}
	\printbibliography
	
\end{document}